\newtheorem{prop}{Proposition}[section]
\newtheorem{plem}[prop]{Lemma}
\theoremstyle{definition}
\title{One dimensional groups definable in the $p$-adic numbers}
\author{Juan Pablo Acosta L\'opez}
\begin{document}
\newpage
\maketitle
\begin{abstract}
A complete list of one dimensional groups definable in the $p$-adic numbers is
given, up to a finite index subgroup and a quotient by a finite subgroup.
\end{abstract}
\section{Introduction}
The main result of the following is Proposition \ref{main-teo}. There
we give a complete list of all one dimensional groups definable in
the $p$-adics, up to finite index and quotient by finite kernel.
This is similar to the list of all groups definable in
a real closed field which are connected and one dimensional obtained in
\cite{one-dimensional-real}.

The starting point is the following result.
\begin{prop}\label{mop-teo}
Suppose $T$ is a complete first order theory in a language extending the
language of rings, such that $T$ extends the theory of fields.
Suppose $T$ is algebraically bounded and dependent.
Let $G$ be a definable group. If $G$ is definably amenable
then there is an algebraic group $H$ and a type-definable subgroup
$T\subset G$ of bounded index and a type-definable group morphism
$T\to H$ with finite kernel.
\end{prop}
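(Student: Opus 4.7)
The strategy is to combine the theory of f-generic types in definably amenable dependent groups with a Weil-style group chunk reconstruction, using algebraic boundedness to ensure that the target is an honest algebraic group. First, since the theory is dependent and $G$ is definably amenable, the Hrushovski--Pillay--Simon machinery gives a global f-generic type $p$ on $G$ and guarantees that the smallest type-definable bounded-index subgroup $G^{00}$ exists. I would take the required subgroup to be $T := G^{00}$, equivalently the left stabilizer of $p$; by construction it is type-definable and of bounded index in $G$, and every element of $T$ fixes $p$.

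Next I would construct $H$ by a group chunk argument. Pick realisations $a, b$ of $p$ independent over a small model $M$. Algebraic boundedness, combined with the hypothesis that the theory extends the theory of fields, implies that the model-theoretic algebraic closure of $a$ agrees, up to finite fibres, with its field algebraic closure; hence the locus of $a$ may be taken to be a constructible subset $V$ of some algebraic variety. Applying algebraic boundedness coordinate by coordinate to the definable multiplication on generic pairs forces the group operation to be given, generically, by a rational map $V \times V \to V$. A Weil-style reconstruction (as in Hrushovski's group chunk / approximate-subgroups framework, adapted to the dependent setting) then yields an algebraic group $H$, a birational identification of $V$ with a generic subvariety of $H$, and, upon writing an arbitrary element of $T$ as a product of two generics, an extension to a type-definable homomorphism $T \to H$.

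Finally, suppose $t \in T$ has trivial image in $H$ and let $a$ realise $p$ independently of $t$. Then $ta$ and $a$ share the same image, so $ta \in \mathrm{acl}(a)$, and cancellation gives $t \in \mathrm{acl}(\emptyset)$; by algebraic boundedness the field-sort points of $\mathrm{acl}(\emptyset)$ form a finite set, so the kernel is finite. The main technical obstacle is the group chunk step: promoting a definable generic multiplication to a rational map, and running Weil's reconstruction in the type-definable setting, requires taking several mutually independent generics (to witness associativity and cancellation on a generic open set) together with a careful use of algebraic boundedness to pass from definable to constructible/rational data.
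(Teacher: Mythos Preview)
The paper does not actually prove this proposition; it is quoted as Theorem~2.19 of \cite{mop} (Montenegro--Onshuus--Simon), with only the side remark that algebraic boundedness for the $p$-adic field can be read off from \cite{algebraic-boundedness} or from the cell-decomposition arguments in \cite{cell-decomposition}. So there is no in-paper proof to compare your outline against.

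Your sketch is broadly the shape of the argument behind \cite{mop}: pass to a global f-generic type, take its stabilizer $G^{00}$ as the type-definable bounded-index subgroup, use algebraic boundedness to see that the generic multiplication is governed by rational functions, and run a Weil/Hrushovski group-chunk reconstruction to produce the algebraic group $H$. One step in your kernel argument, however, is not correct as written. From $ta\in\mathrm{acl}(a)$ you conclude $t\in\mathrm{acl}(\emptyset)$ and then assert that ``by algebraic boundedness the field-sort points of $\mathrm{acl}(\emptyset)$ form a finite set''. Algebraic boundedness says that model-theoretic $\mathrm{acl}$ agrees with field-theoretic relative algebraic closure; it does \emph{not} say that $\mathrm{acl}$ of a small set is finite---over $\mathbb{Q}_p$, for example, $\mathrm{acl}(\emptyset)$ already contains every element algebraic over $\mathbb{Q}$ and is infinite. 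The right argument is: the kernel is type-definable and contained in $\mathrm{acl}(M)$ for the base model $M$; since $\mathrm{acl}(M)$ is a directed union of finite $M$-definable sets, compactness forces any type-definable subset of it to be finite. With that fix the plan is reasonable, but be aware that carrying out the group-chunk step rigorously in the merely dependent, type-definable setting is where the real work in \cite{mop} lies, and your paragraph is only a high-level pointer to it rather than a proof.
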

This is found in Theorem 2.19 of \cite{mop}. 
We just note that algebraic boundedness is seen in greater generality
than needed in \cite{algebraic-boundedness}, and it can also be extracted
from the proofs in cell decomposition \cite{cell-decomposition}.

That one dimensional groups are definably amenable comes from the following
result.
\begin{prop}\label{abelian-by-finite}
Suppose $G$ is a definable group in $Q_p$ which is one dimensional.
Then $G$ is abelian-by-finite, and so amenable.
\end{prop}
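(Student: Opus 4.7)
The plan is to exploit the $p$-adic analytic structure of $G$. By Pillay's theorem on groups definable in $p$-adically closed fields, $G$ carries a definable $p$-adic analytic manifold structure compatible with its group operations; since $\dim G = 1$, $G$ becomes a one-dimensional $p$-adic Lie group, whose Lie algebra $\mathfrak{g}$ is automatically abelian. In local coordinates around the identity the group law is an analytic function of two variables, and since the Lie bracket vanishes the Baker-Campbell-Hausdorff formula reduces to ordinary addition after a change of coordinates. Consequently a sufficiently small open subgroup $U\le G$ is isomorphic as a local group to an open subgroup of $(Q_p,+)$ and is thus abelian.

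To upgrade this local abelianness to a finite-index normal abelian subgroup, I would analyze the adjoint representation $\mathrm{Ad}\colon G\to \mathrm{Aut}(\mathfrak{g})\cong Q_p^{\times}$, a definable group homomorphism. Its kernel $K$ is a definable normal subgroup; for $g\in K$, the conjugation $c_g$ has trivial differential at $e$, and the identity $c_g(\exp X)=\exp(\mathrm{Ad}(g)X)=\exp X$ shows $c_g$ is the identity on $\exp(\mathfrak{g})$, an open neighborhood of $e$. Hence $K$ centralizes the open subgroup $U$; in particular $U\subseteq K$, and a short argument using the one-dimensionality of $\mathfrak{g}$ yields that $K$ itself is abelian.

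The main obstacle is then to prove $[G:K]$ is finite. In the $p$-adic setting this does \emph{not} follow from $\dim K=\dim G$ alone (witness $Z_p\le Q_p$), so more care is needed. My plan is to study $\mathrm{Ad}(G)\le Q_p^{\times}$ directly: as a definable image of the one-dimensional group $G$, and under the constraint that its elements act on the local $(Q_p,+)$ structure by automorphisms preserving the open subgroup $U$, it should be forced into a compact, and ultimately finite, subgroup of $Q_p^{\times}$. A fallback route, should this direct approach falter, is to analyze centralizers of generic elements and apply a dimension-counting argument on conjugacy classes via $p$-adic cell decomposition, extracting the desired abelian subgroup of finite index from the case analysis $\dim C_G(g)\in\{0,1\}$.
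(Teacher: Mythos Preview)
The paper does not give its own proof of this proposition; it simply quotes the result from Pillay--Yao \cite{pillay-yao} and refers the reader there (and to \cite{measure} for the implication abelian-by-finite $\Rightarrow$ amenable). So your proposal is not being compared against a proof in the paper but against an external reference.

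On the substance of your attempt: the Lie-theoretic opening is sound and is indeed the natural starting point used in \cite{pillay-yao}. Over the standard model, Pillay's theorem makes $G$ a one-dimensional $p$-adic Lie group, and the formal-group / BCH argument correctly produces an open abelian subgroup $U$.

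There is, however, a genuine error in the second paragraph. Your claim that $K=\ker(\mathrm{Ad})$ is abelian is false in general. Take $G=\mathbb{Z}_p\times S_3$: this is a one-dimensional group definable in $\mathbb{Q}_p$, its Lie algebra is that of the $\mathbb{Z}_p$ factor, and since the product is direct the finite factor acts trivially on it. Hence $\mathrm{Ad}$ is trivial, $K=G$, and $K$ is not abelian. What your argument actually shows is only that $U\subseteq Z(K)$, i.e.\ that $Z(K)$ is open in $K$; passing from this to ``$K$ abelian'' would require exactly the kind of open-subgroup-has-finite-index statement that you yourself flag as the main obstacle. The one-dimensionality of $\mathfrak{g}$ does not rescue this step: it tells you $K$ has abelian Lie algebra, which is just the statement that $K$ has an open abelian subgroup, and you already knew that.

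You are right that the finite-index step is where the content lies, and your proposed reason (``$\mathrm{Ad}(G)$ must preserve $U$, hence lands in something compact, hence finite'') does not work: conjugation by $g$ sends $U$ to $gUg^{-1}$, not necessarily to $U$, so there is no a priori reason $\mathrm{Ad}(G)\subseteq Z_p^{\times}$, and even $Z_p^{\times}$ is infinite. Your fallback route via centralizers and dimension of conjugacy classes is much closer to what actually works in \cite{pillay-yao}: one shows that every conjugacy class is finite (zero-dimensional), obtains a uniform bound, and then invokes a theorem of Neumann type to extract a finite-index abelian subgroup. If you want to complete a self-contained argument, pursue that line rather than the $\mathrm{Ad}$-kernel one.

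A final caution: you move freely between the monster $Q_p$ and standard $\mathbb{Q}_p$. The analytic machinery (exp, BCH, Lie algebra) lives over $\mathbb{Q}_p$; to conclude something about the saturated model you should either argue that ``abelian-by-finite with index $\le n$'' is first-order and transfer, or work over $\mathbb{Q}_p$ throughout and only pass to $Q_p$ at the end.
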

This is found in \cite{pillay-yao}. One also finds there
 a review of the definition and
some properties of dimension.
That an abelian-by-finite group is amenable can be found for example
in Theorem 449C of \cite{measure}.

A one dimensional algebraic group over a field of characteristic $0$
is the additive group, the multiplicative group, the twisted one dimensional
torus or an elliptic curve. Each of these cases is dealt separately
in Sections \ref{Qp}, \ref{Qptimes}, \ref{torus} and \ref{elliptic} 
respectively.
The general strategy is to describe the type-definable subgroups
in these concrete cases, and then by compactness extend the inverse
of the morphism in Proposition \ref{mop-teo} to a definable or
$\lor$-definable group.
\section{Logic topology}
We take $T$ a complete theory of first order logic 
and $M\vDash T$ a monster model, which for definiteness will be taken to be 
a saturated model of cardinality $\kappa$, with $\kappa$
an inaccesible cardinal with $\kappa > |T|+\aleph_0$.
A bounded cardinal is a cardinal $<\kappa$.

A definable set is a set $X\subset M^n$ defined by a formula with parameters.
A type-definable set is a set $X\subset M^n$ which is an intersection
of a bounded number of definable sets.
An $\lor$-definable set is the complement of a type-definable set, which is to
say a bounded union of definable sets.
For a set $A\subset M$ of bounded cardinality, 
an $A$-invariant set is a set $X\subset M^n$ 
such that
$X=\sigma(X)$ for all $\sigma\in\text{Aut}(M/A)$. 
Here $\text{Aut}(M/A)$ denotes the set of 
all automorphisms $\sigma$ of $M$ such that $\sigma(a)=a$ for all $a\in A$.
An invariant set $X$ 
is a set for which there exists a bounded set $A$ such that
$X$ is $A$-invariant.
This is equivalent to saying that $X$ is a bounded union of type-definable sets.

For an invariant set $X$ we will call a set $Y\subset X$ type-definable
relative to $X$ if it is the intersection of a type-definable set with $X$.
Similarly we have relatively $\lor$-definable and relatively definable sets.

Next we define the logic topology of a bounded quotient, 
this is usually defined for type-definable sets and equivalence relations
but in section \ref{Z} we give an isomorphism 
$O(a)/o(a)\cong \mathbb{R}$, where $O(a)$ is not type-definable.
We give then definitions that include this case. 

Suppose given $X$ an invariant set and $E\subset X^2$ an invariant equivalence
relation on $X$ such that $X/E$ is bounded. 
Then the logic topology on 
$X/E$ is defined as the topology given by closed sets, 
a set $C\subset X/E$ being closed iff $\pi^{-1}(C)\subset X$ is a type definable set relative to $X$. 
Here $\pi$ denotes the canonical projection $\pi:X\to X/E$.
As a bounded intersection and a finite union 
of type definable sets is type definable this gives
a topology on $X/E$.

Next we enumerate some basic properties of this definition,
the next two results are known for the logic topology on type-definable
sets and the proofs largely go through in
 this context. See \cite{logic-topology}.
I will present this proof for convenience.
\begin{prop}
\begin{enumerate}
\item The image of a type-definable set $Y\subset X$
in $X/E$ is compact.
\item If $X$ is a type-definable set 
$E\subset X^2$ is a type-definable equivalence relation
and $Y\subset X$ is a $E$-saturated invariant subset such that
for $E_Y=E\cap Y^2$
$Y/E_Y$ is bounded,
then $Y/E_Y$ is Hausdorff.
Also, the map $\pi:Y\to Y/E_Y$ has the property that the image of 
a set type-definable relative to $Y$ is closed.
\item If $G$ is a type-definable topological group and $K\subset H\subset G$
are subgroups such that $H$ is invariant, $K$ is type-definable,
 $K$ is a normal subgroup of $H$,
and such that $H/K$ is bounded, then $H/K$ is a Hausdorff topological group.
\end{enumerate}
\end{prop}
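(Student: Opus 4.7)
For part (1), the plan is a standard saturation/FIP argument. Given a bounded family $\{C_i\}$ of closed subsets of $\pi(Y)$ with the finite intersection property, write $C_i = \pi(Y)\cap D_i$ for $D_i$ closed in $X/E$; the sets $\pi^{-1}(D_i)\cap Y$ are then type-definable in the monster (since $Y$ is type-definable) and form a bounded family with the FIP, so by saturation their total intersection is nonempty, giving a point whose image lies in $\bigcap_i C_i$.

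For part (2), I would first establish that $\pi$ sends sets type-definable relative to $Y$ to closed sets, then derive Hausdorffness from this. The $E$-saturation of $Z = Y\cap D$ inside $Y$ is $\{y\in Y : \exists z,\, E(y,z)\wedge D(z)\}$ ($E$-saturation of $Y$ lets us drop $z\in Y$); the key move is to rewrite this existential as a type-definable condition on $y$ via the principle "$\exists z$ realizing a partial type = consistency of that partial type = every finite subtype is realized", each finite subtype contributing a single definable condition on $y$ alone. Hausdorffness then follows: given $[a]\neq [b]$ in $Y/E_Y$, write $E=\bigcap_n E_n$ with each $E_n$ reflexive and symmetric; disjointness of $[a]_E\cap X$ and $[b]_E\cap X$ combined with saturation inside $X$ yields an $n$ with $A_n\cap B_n\cap X = \emptyset$ for $A_n = \{x\in X : E_n(a,x)\}$; then $U_a := Y/E_Y\setminus \pi(Y\cap(X\setminus A_n))$ and the analogous $U_b$ are disjoint open neighborhoods of $[a]$ and $[b]$, their openness coming from the closed-map property just established.

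For part (3), Hausdorffness follows by applying part (2) with $X = G$, $Y = H$, and the equivalence $xEy\iff x^{-1}y\in K$, which is type-definable using the type-definable group operations on $G$ and for which $H$ is $E$-saturated because $K\subseteq H$ is a subgroup. Continuity of inversion is immediate: the preimage of a closed set $\pi(R)$ with $R = H\cap S$ ($S$ type-definable in $G$) is $H\cap\{x : x^{-1}\in S\}$, type-definable relative to $H$ since inversion on $G$ is type-definable. The delicate step is continuity of multiplication. Given $([a],[b])$ and a closed $F = \pi(R_F)$ with $[ab]\notin F$ and $R_F = H\cap S$, normality of $K$ in $H$ yields $aK\cdot bK = abK$, so $(aK\times bK)\cap\mu^{-1}(S) = \emptyset$ in $G\times G$; saturation then produces definable approximations $aK_n\supseteq aK$, $bK_n\supseteq bK$ and $T\supseteq \mu^{-1}(S)$ with $(aK_n\times bK_n)\cap T = \emptyset$, and then $V_a := (H/K)\setminus \pi(H\cap(G\setminus aK_n))$ together with the analogous $V_b$ are open neighborhoods of $[a]$ and $[b]$ whose $\bar\mu$-image avoids $F$. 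The main obstacle I expect is precisely this last step: managing the interplay between the type-definable group structure on $G$ and the invariant subgroup $H$, and invoking normality of $K$ in $H$ at exactly the reduction $aK\cdot bK = abK$ that makes the saturation inside $G\times G$ deliver a usable rectangle.
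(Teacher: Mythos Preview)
Your proposal is correct and follows essentially the same route as the paper. Part (1) is identical; in part (2) both you and the paper obtain the closed-map property by recognizing $\pi^{-1}\pi(Z\cap Y)$ as $Y\cap\{x:\exists z\,(E(x,z)\wedge D(z))\}$ and using that projections of type-definable sets are type-definable, then derive Hausdorffness by separating the disjoint type-definable classes $E_a,E_b$ by definable sets and taking complements via the closed-map property. The only organizational difference is in part (3): the paper first shows that inversion and one-sided translations are continuous, then proves continuity of multiplication only at $(1,1)$ via $K^2\cap Z=\emptyset$ and a single compactness step producing a definable $V\supseteq K$ with $(V\cap G)^2\cap Z=\emptyset$; you instead work at a general point $([a],[b])$ using $aK\cdot bK=abK$. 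These are the same compactness argument up to a translation, so nothing is genuinely different. (One small redundancy: the auxiliary $T\supseteq\mu^{-1}(S)$ in your part (3) is unnecessary, since $\mu^{-1}(S)$ is already type-definable and sits on the correct side of the separation.)
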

\begin{proof}
1) This is by compactness, in detail if $\{C_i\}$ is a family of closed
subsets of $X/E$ such that $\{C_i\cap \pi(Y)\}$ 
has the finite intersection property then 
$\{\pi^{-1}(C_i)\cap Y\}$ are a bounded family of type-definable sets
with the finite intersection property, and so it has non-empty intersection.

2) Take $Z$ a type-definable set. Then
$\pi^{-1}\pi(Z\cap Y)=Y\cap \{x\in X\mid \text{there exist }x'\in Z, (x,x')\in E\}$, as $Z$ and $E$ are type-definable $\pi^{-1}\pi(Y\cap Z)$ is type-definable
relative to $Y$ and $\pi(Y\cap Z)$ is closed as required.

Now to see that $Y/E_Y$ is Hausdorff take $x,y\in Y$ such that 
$\pi(x)\neq \pi(y)$. Then for $E_z=\{x'\in X\mid (z,x')\in E\}$
we have $E_x\cap E_y=\emptyset$. As $E_x$ and $E_y$ are type-definable
then there are definable sets $D_1,D_2$ such that $E_x\subset D_1$
$E_y\subset D_2$ and $D_1\cap D_2=\emptyset$. Then by the previous paragraph
there are open sets $U_1,U_2\subset X/E$ such that
$\pi^{-1}U_i\subset D_i$.

3) By 2) this space is Hausdorff. If $C$ is type-definable and $C\cap H$ 
is $\pi$-saturated, then $(C\cap H)^{-1}=C^{-1}\cap H$ is type-definable
relative to $H$ and $\pi$-saturated, so inversion is continous in $H/K$.
Similarly one obtains that left and right translations are continous.

Finally one has to see that the product is continous at the identity,
let $1\subset U\subset H/K$ be an open set, and 
let $Z\subset G$ be type-definable
such that $H\setminus Z=\pi^{-1}U$. Then $K\cap Z=\emptyset$.
As $K=K^2$ one also has $K^2\cap Z=\emptyset$. Now by compactness there is
$K\subset V$ definable such that $(V\cap G)^2\cap Z=\emptyset$.
Now by 2) there exists $1\subset V'\subset \pi(V\cap H)$ open such that
$\pi^{-1}V'\subset  V\cap H$, so $V'^2\subset U$ as required.
\end{proof}
\begin{prop}\label{compact-generation}
If $f:X\to Y$ is a map between 
a set $X$ which is a bounded union of type-definable sets $X_i$,
such that $X_i$ are definable relative to $X$,
and $Y$ is a union of type-definable sets $Y_i$ such that
$f(X_i)\subset Y_i$ and $f|_{X_i}\subset X_i\times Y_i$ is type-definable,
 then the inverse image of a set type-definable relative to  $Y$
by $f$ is type-definable relative to $X$.
So if $E\subset X^2$ and $F\subset Y^2$ are invariant equivalence relations
such that $f$ is compatible with respect to them and $X/E, Y/F$ are bounded,
then $\bar{f}:X/E\to Y/F$ is continous.
\end{prop}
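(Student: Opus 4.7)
The strategy is to reduce continuity of $\bar{f}$ to the inverse image statement and then, given any $C$ type-definable relative to $Y$, to construct a single type-definable set $W\subset M^n$ with $f^{-1}(C)=X\cap W$. The continuity reduction is immediate: if $\bar{C}\subset Y/F$ is closed, then $\pi_Y^{-1}(\bar{C})$ is type-definable relative to $Y$, and the inverse image statement applied to $\pi_Y^{-1}(\bar{C})$ shows that $f^{-1}(\pi_Y^{-1}(\bar{C}))=\pi_X^{-1}(\bar{f}^{-1}(\bar{C}))$ is type-definable relative to $X$, whence $\bar{f}^{-1}(\bar{C})$ is closed in $X/E$.

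For the inverse image, write $C=Y\cap Z$ with $Z\subset M^m$ type-definable, let $D_i$ be definable sets with $X_i=X\cap D_i$, and let $\Gamma_i$ denote the type-definable graph of $f|_{X_i}$, which by hypothesis satisfies $\Gamma_i\subset X_i\times Y_i$. Set
\[ W_i=\mathrm{proj}_1\!\left(\Gamma_i\cap(M^n\times Z)\right).\]
Each $W_i$ is type-definable because the projection of a type-definable set onto a coordinate in a saturated monster model is type-definable, by compactness applied to the partial type in the projected variable: one has $\mathrm{proj}_1(\bigcap_\alpha P_\alpha)=\bigcap_{\alpha_1,\dots,\alpha_k}\mathrm{proj}_1(P_{\alpha_1}\cap\cdots\cap P_{\alpha_k})$. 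The candidate witness is then
\[ W=\bigcap_i\bigl[(M^n\setminus D_i)\cup W_i\bigr].\]

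To see $W$ is type-definable, observe that each factor is the union of a definable and a type-definable set: writing $W_i=\bigcap_\alpha E_{i,\alpha}$, distributivity gives $(M^n\setminus D_i)\cup W_i=\bigcap_\alpha[(M^n\setminus D_i)\cup E_{i,\alpha}]$, a bounded intersection of definable sets, hence type-definable. A bounded intersection of type-definable sets is type-definable, so $W$ is type-definable. The identity $f^{-1}(C)=X\cap W$ is then a pointwise check. If $x\in X$, pick $i_0$ with $x\in X_{i_0}\subset D_{i_0}$; then membership of $x$ in $W$ forces $x\in W_{i_0}$, which places $(x,f(x))$ in $\Gamma_{i_0}\cap(M^n\times Z)$ and yields $f(x)\in Z$. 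Conversely, for $x\in f^{-1}(C)$ and any $i$, either $x\notin D_i$ (and the factor is satisfied trivially) or $x\in X_i$ with $(x,f(x))\in\Gamma_i$ and $f(x)\in Z$, placing $x\in W_i$.

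The main obstacle is precisely this assembly step: the naive description $f^{-1}(C)=\bigcup_i W_i$ exhibits the preimage only as a bounded union of type-definable sets, giving relative $\lor$-definability rather than the required relative type-definability. The maneuver is to introduce the definable covers $D_i$ to deactivate the local condition when $x\notin X_i$, which converts the problematic bounded union into the bounded intersection $W$; the distributive-law identity for ``definable $\cup$ type-definable $=$ type-definable'' then makes each factor, and thus $W$ itself, type-definable.
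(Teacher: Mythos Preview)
Your proof is correct. The paper itself omits the proof entirely, stating only that it is ``straightforward and omitted,'' so there is no argument to compare against; your write-up supplies exactly the kind of routine verification the author had in mind, with the key observation being the use of the relative definability $X_i = X\cap D_i$ to convert the covering $\bigcup_i W_i$ into the bounded intersection $\bigcap_i\bigl[(M^n\setminus D_i)\cup W_i\bigr]$, together with the compactness argument that projections of type-definable sets remain type-definable in a saturated model.
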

The proof is straightforward and omitted.

The next result is well known
\begin{prop}\label{open-mapping-theorem}
If $f:G\to H$ is a surjective morphism of topological groups, $G$ is
$\sigma$-compact and $H$ is locally compact Hausdorff, then 
$f$ is open.
\end{prop}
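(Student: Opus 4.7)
The plan is the classical Baire-category proof of the open mapping theorem. Because left translation in both $G$ and $H$ is a homeomorphism and $f$ is a group homomorphism, it suffices to show that $f(U)$ contains an open neighbourhood of $e_H$ for every open neighbourhood $U$ of $e_G$. Fix such $U$ and pick a symmetric open $V\ni e_G$ with $V^2\subset U$.

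The first step is a Baire category argument. Using $\sigma$-compactness of $G$, write $G=\bigcup_n K_n$ with each $K_n$ compact, cover each $K_n$ by finitely many left translates of $V$, and obtain a countable cover $G=\bigcup_n g_nV$. Applying $f$ gives $H=\bigcup_n f(g_n)\overline{f(V)}$, and since $H$ is locally compact Hausdorff it is a Baire space, so some $f(g_n)\overline{f(V)}$ has nonempty interior, hence so does $\overline{f(V)}$. Pick an open $W\subset\overline{f(V)}$ and $w_0\in W$. The symmetry of $V$ together with the inclusion $\overline{A}\cdot\overline{B}\subset\overline{AB}$, valid in any topological group by continuity of multiplication, yields
\[
Ww_0^{-1}\subset\overline{f(V)}\cdot\overline{f(V)}\subset\overline{f(V^2)}\subset\overline{f(U)},
\]
so $\overline{f(U)}$ contains an open neighbourhood of $e_H$.

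The main obstacle is removing the closure. The standard device is to iterate: pick symmetric open neighbourhoods $U=U_0\supset U_1\supset U_2\supset\cdots$ of $e_G$ with $U_{n+1}^2\subset U_n$, apply the Baire step to each $U_n$ to obtain open $O_n\ni e_H$ with $O_n\subset\overline{f(U_n)}$, and use local compactness of $H$ to arrange $\overline{O_1}$ compact and the $O_n$ shrinking to $\{e_H\}$. Given $y\in O_1$, inductively choose $v_n\in U_n$ so that $y\cdot f(v_1\cdots v_n)^{-1}\in O_{n+1}$; the partial products lie in $U_1U_2\cdots U_n\subset U_0=U$ by the standard telescoping $U_{k+1}U_{k+2}\cdots U_n\subset U_k$, and $f(v_1\cdots v_n)\to y$ in $H$. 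A compactness/completeness argument, using $\overline{O_1}$ compact in $H$ and lifting through $f$, then produces $v\in U$ with $f(v)=y$. Making this last lifting step rigorous, and in particular controlling the convergence of the partial products in $G$, is the technical heart of the proof and is the reason the proposition is typically invoked as a black box, as is done here.
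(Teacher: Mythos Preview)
The paper does not prove this proposition; it is simply recorded as ``well known,'' so there is no argument in the paper to compare against.

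Your first step (Baire category to show $\overline{f(V)}$ has nonempty interior, hence $\overline{f(U)}$ is a neighbourhood of $e_H$) is correct. The gap is in the second step. The iterative scheme you outline --- choosing $U_{n+1}^2\subset U_n$, building partial products $v_1\cdots v_n$ whose images converge to a given $y$, and then extracting a limit $v\in U$ with $f(v)=y$ --- is the proof for \emph{Polish} (completely metrizable) groups. It genuinely requires completeness of $G$ to make the partial products converge, and nothing in the hypotheses provides that; you rightly flag the step as unfinished, but under the stated assumptions it cannot be finished along these lines. Compactness of $\overline{O_1}$ in $H$ does not help you lift limits back into $U\subset G$.

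The way to remove the closure is to use $\sigma$-compactness more directly so that closures of $f(V)$ never appear. From $G=\bigcup_n K_n$ with $K_n$ compact one gets $H=\bigcup_n f(K_n)$ as a countable union of \emph{compact, hence closed} subsets of the Hausdorff space $H$; Baire then yields (after translating) a compact $K\subset G$ with $e_G\in K$ and $e_H\in\operatorname{int}f(K)$. Given $U$, pick a symmetric open $V$ with $\overline{V}^{\,2}\subset U$ (choose $V$ with $V^4\subset U$ and use $\overline{V}\subset V^2$), cover $K$ by finitely many $g_iV$, and set $C_i=K\cap g_i\overline{V}$. The $f(C_i)$ are finitely many closed sets covering $\operatorname{int}f(K)$, so some $f(C_{i_0})$ has nonempty interior $W$; picking $w_0\in W$ gives
\[
w_0^{-1}W\ \subset\ f(C_{i_0})^{-1}f(C_{i_0})\ =\ f\bigl(C_{i_0}^{-1}C_{i_0}\bigr)\ \subset\ f\bigl(\overline{V}^{\,2}\bigr)\ \subset\ f(U),
\]
an open neighbourhood of $e_H$. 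Two applications of Baire (the second over a finite union), no limits, no completeness.
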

\begin{prop}\label{localgroupglobal} 
$L$ is a language and $L'$ is an extension of $L$,
$G$ is a $L$-definable group and 
$O$ is set which is a countable union of
$L'$-definable sets. 
$O$ is a commutative group with $L'$-definable product and inverse 
and there is a
$L'$-definable surjective group morphism $O\to G$. $O$
contains a $L'$-type-definable group $o$ of 
bounded index such that $o$ injects in $G$, with image $L$-type-definable.
Then there exists $O'$ a disjoint union of denumerable 
$L$-definable sets which is a group
 with a
$L$-definable product and inverse, and there is 
a $L'$-definable isomorphism $O\to O'$ such that the map $O'\to G$ is $L$-definable, and the image of $o$ in $O'$ is $L$-type-definable.
\end{prop}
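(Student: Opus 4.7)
The plan is to realize $O$ as a $K$-bundle over $G$, where $K:=\ker f$, and to model $O'$ on a countable disjoint union of copies of $G$ indexed by $K$. The $L$-definability of $G$ already carries most of the structure, and the ``difference'' between $O$ and $G$ is captured by the kernel $K$, which I will first show is countable.

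\textbf{Step 1 ($K$ is countable).} Since $o$ injects into $G$, we have $o\cap K=\{1\}$, so $K$ embeds into $O/o$ via $k\mapsto k\cdot o$; hence $|K|\leq [O:o]<\kappa$. Writing $O=\bigcup_n O_n$ as a countable union of $L'$-definable sets, each $K\cap O_n$ is $L'$-definable of cardinality $<\kappa$. In a $\kappa$-saturated model any definable set of cardinality $<\kappa$ is finite, so every $K\cap O_n$ is finite and $K=\bigcup_n(K\cap O_n)$ is at most countable. Enumerate $K=\{k_n:n<\omega\}$, using each $k_n\in M$ as a parameter.

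\textbf{Step 2 (construction of $O'$ and the bijection $\Phi$).} Set $O':=\bigsqcup_{n<\omega} G\times\{k_n\}$; each piece $G\times\{k_n\}$ is $L$-definable (the element $k_n\in M$ being a valid $L$-parameter), so $O'$ is a countable disjoint union of $L$-definable sets, and the projection $O'\to G$ is $L$-definable. Build an $L'$-definable section $s:G\to O$ of $f$ extending $f|_o^{-1}:G_0\to o$ over $G_0:=f(o)$ by choosing $L'$-definable lifts $a_\alpha\in O$ of representatives $g_\alpha\in G$ for $G/G_0$ and setting $s(g_\alpha\cdot h):=a_\alpha\cdot f|_o^{-1}(h)$ for $h\in G_0$. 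Define $\Phi(x):=(f(x),k)$, where $k\in K$ is determined by $x=s(f(x))\cdot k$. This $\Phi$ is an $L'$-definable bijection, and $\Phi(o)=G_0\times\{1_K\}$, which is $L$-type-definable since $G_0$ is and $1_K$ is a parameter.

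\textbf{Main obstacle.} Transporting the group law to $O'$ gives a product of the form $(g,k)(g',k')=(gg',c(g,g')\,kk')$ where $c:G\times G\to K$ is the cocycle $c(g,g'):=s(g)s(g')s(gg')^{-1}$, and a priori $c$ is only $L'$-definable. Because $K$ is countable with each $k_n$ an individual $L$-parameter, $c$ is $L$-definable exactly when each preimage $c^{-1}(k_n)\subset G\times G$ is $L$-definable. The vanishing $c|_{G_0\times G_0}=1_K$ (since $s|_{G_0}$ is a group homomorphism) together with the $L$-type-definability of $G_0$ provides the starting point, and the countability of $[G:G_0]$ (which follows from the same saturation argument applied to $K$ and the cosets lifted into $O$) lets one encode representatives of $G/G_0$ as $L$-parameters from $M$, pushing $L$-definability of $c$ to all of $G\times G$. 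This transition from the $L'$-determined cocycle to an $L$-definable description is the substantive heart of the lemma; once it is in place, the group axioms for $O'$ and the remaining definability claims follow by routine verification from the formula for $\Phi$.
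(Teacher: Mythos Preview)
Your overall architecture (realize $O$ as a $K$-bundle over $G$, push the cocycle down to $G$) is reasonable, but two of your steps do not go through as written, and they are precisely where the content of the proposition lies.

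\textbf{The section $s$ is not $L'$-definable.} You set $s(g_\alpha h)=a_\alpha\cdot f|_o^{-1}(h)$ for $h\in G_0=f(o)$. To evaluate $s(g)$ one must first decide which coset $g_\alpha G_0$ contains $g$. But $G_0$ is only \emph{type}-definable, so each condition ``$g\in g_\alpha G_0$'' is a type, not a formula, and there are countably many such pieces. A function glued from countably many pieces whose domains are merely type-definable is not, in general, definable in any sense you can use here (its graph is an $\lor$-definable set that need not be relatively definable). The same objection applies to $f|_o^{-1}$ itself: you need to first extend it to an $L'$-definable inverse on a genuine $L'$-definable neighbourhood of $G_0$, and then the translates of that neighbourhood overlap, so the naive coset picture breaks down.

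\textbf{The cocycle argument is not completed and the sketch does not work.} Even granting an $L'$-definable $s$ with $c$ constant on each $g_\alpha G_0\times g_\beta G_0$, the preimage $c^{-1}(k_n)$ is then a countable union of products of type-definable cosets. That is not an $L$-definable subset of $G\times G$; having the $g_\alpha$ available as $L$-parameters does nothing to make the cosets $g_\alpha G_0$ into $L$-definable sets. So the sentence ``pushing $L$-definability of $c$ to all of $G\times G$'' hides exactly the difficulty you set out to solve.

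The paper sidesteps both problems by never working with the type-definable cosets directly. One chooses $L'$-definable symmetric sets $o\subset V\subset U$ with $V^4\subset U$ and $f$ injective on $U$; compactness against the $L$-type-definable set $f(o)$ lets one shrink so that $f(V)$ and $f(U)$ are $L$-definable. One then covers $O$ by countably many translates $a_iV$, carves out disjoint $L$-definable pieces $W_i\subset f(V)$, and takes $O'=\bigsqcup_i W_i$. The crucial point for $L$-definability of the product is that if $a_ix'\cdot a_jy'=a_rz'$ with $x',y',z'\in V$, then commutativity forces $a_ra_i^{-1}a_j^{-1}\in V^3\subset U$, and injectivity of $f$ on $U$ converts this into the $L$-formula $f(a_ra_i^{-1}a_j^{-1})\cdot z=x\cdot y$ in $G$. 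This trick---trading the $L'$-relation in $O$ for an $L$-relation in $G$ via injectivity on a definable neighbourhood---is the missing idea in your attempt.
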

Before starting the proof we clarify that 
disjoint unions of infinitely many definable sets may not be a 
subset of the monster model under consideration, however when the theory
has two 0-definable elements then finite disjoint unions can always be
considered to be a definable set.
For $\lor$-definable sets or disjoint unions of definable sets,
 a definable mapping means that the restriction
to a definable set has definable image and graph. This applies to the group
morphisms and the product and inverse in $O$ and $O'$.
\begin{proof}
Let $o\subset U\subset O$ be a $L'$-definable symmetric sets
such that the restriction of $f:O\to G$ to $U$ is injective. 
Then $f(o)\subset f(U)$, is such that $f(U)$ is $L'$-definable
and $f(o)$ is $L$-type-definable. By compactness there is 
$f(o)\subset X\subset f(U)$ such that $X$ is $L$-definable.
Replacing $U$ by $f^{-1}(X)\cap U$ we may assume $f(U)$ is $L$-definable.
Now take $V$ a symmetric $L'$-definable set such that 
$o\subset V$ and $V^4\subset U$.
Similarly replacing $V$ $f^{-1}(Y)\cap V$ we may further assme 
that $f(V)$ is $L$-definable.
A bounded number of translates of $V$ cover $O$, so by compactness, a denumerable number also cover it say $O=\bigcup_{i<\omega}a_iV$.

Take $X=\bigsqcup_{i<\omega}f(V)$, with canonical injections
$j_i:f(V)\to X$. Define a map $g:X\to O$ by 
$gj_if(x)=a_ix$. This is a surjective $L'$-definable map.
Define $W_i=f(V)\setminus (gj_i)^{-1}\bigcup_{r<i}(gj_r)f(V)$.
Then $W_i$ is $L$-definable. 
We have $O'=\bigsqcup_{i<\omega} W_i$ a disjoint union of $L$-definable
sets and bijective $L'$-definable map $g:O'\to O$. We also have
that $gfj_i(W_i)$ is $L$-definable, so $fg$ is $L$-definable by translation.

Finally we have to prove that the group structure that makes $g$ a group
isomorphism is $L$-definable. Now $g(W_i)^{-1}$ is contained in 
a finite number of translates $a_jV$ so $W_i^{-1}$ is contained in a finite
number of $W_j$. Similarly $W_iW_{i'}$ is contained in a finite number 
of $W_{j}$. So it is enough to see that product and inverse are relatively
definable. We do the product as the inverse is similar, but easier.
$(x,y,z)\in W_i\times W_j\times W_r$ is such that 
$z=xy$ iff $a_rz'=a_ix'a_jy'$ for $x',y',z'\in V$ with images under $f$
$x,y,z$. In this case $a_ra_i^{-1}a_j^{-j}\in V^3$ (here we use the hypothesis
of commutativity), so $z=xy$ iff $f(a_ra_i^{-1}a_j^{-1})z=xy$, which is
visibly $L$-definable.
\end{proof}

\section{Subgroups of $Z$}\label{Z}
We denote $Z$ a saturated model of inaccesible cardinal of the theory
of the ordered abelian group $\mathbb{Z}$. 
That is, $Z$ is a monster model of Pressburger arithmetic.

Here we determine the type-definable
subgroups of $Z$.
We shall use cell decomposition in dimension one, here is the statement.
\begin{prop}
If $X\subset Z$ is a definable set then there exist
$X=S_1\cup\cdots\cup S_n$ with $S_i$ pairwise disjoint sets
of the form $S_i=(a_i,b_i)\cup (n_iZ+r_i)$ with $a_i,b_i\in Z$
or $a_i=-\infty$ or $b_i=\infty$,
and $n_i, r_i\in\mathbb{Z}$, and $(a_i,b_i)$ infinite, or
$S_i=\{a_i\}$. Such decomposition is called a cell decomposition.
If $f:X\to Z$ is a definable function then there exist a cell decomposition
of $X=S_1\cup\cdots \cup S_n$ such that $f(x)=\frac{p_i}{n_i}(x-r_i)+s_i$
for $x\in S_i$ in an infinite cell and $p_i\in\mathbb{Z},s_i\in Z$.
\end{prop}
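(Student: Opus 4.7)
The plan is to proceed via quantifier elimination for Presburger arithmetic in the language $\{+,-,<,0,1\}\cup\{D_n\}_{n\geq 2}$, where $D_n(t)$ expresses $n\mid t$. In this language every definable set is a Boolean combination of atomic formulas. Restricting to one variable $x$ with parameters from $Z$, atomic formulas take one of the forms $mx<t$, $mx=t$ (with $m\in\mathbb{Z}$, $t\in Z$), or $D_n(mx+t)$. The first two cut out intervals or points, while the third cuts out a finite union of arithmetic progressions modulo $n$. I would invoke the standard proof of QE (or refer to a reference), since running the Presburger elimination algorithm is routine.

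To produce the desired cell decomposition I would take $N$ a common multiple of all the moduli $n$ appearing in the divisibility literals of the defining formula, and partition $Z$ into its residue classes modulo $N$. On each residue class every divisibility condition collapses to $\top$ or $\bot$, so the trace of the definable set on a class reduces to a Boolean combination of inequalities in $x$, hence to a finite disjoint union of open intervals and singletons. Each infinite piece on a class is then of the shape $(a,b)\cap (NZ+r)$, matching the required form $(a_i,b_i)\cap(n_iZ+r_i)$ (reading the $\cup$ in the statement as $\cap$). Refining across residue classes and collecting the singletons separately yields the cell decomposition of $X$.

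For the function part I would apply the same analysis to the graph $\Gamma_f\subset Z^2$. Put its defining formula in disjunctive normal form, and on each $x$-fiber recall that the defining formula in $y$ must pick out a unique element. A Boolean combination of inequalities and divisibility conditions in $y$ alone defines either a finite set or an infinite union of cells in $y$, so for the fiber to be a singleton at least one literal in the active disjunct must be an equality $ny=px+c$ (equivalently $ny=p(x-r)+ns$), forcing $y=\tfrac{p}{n}(x-r)+s$. The $x$-locus on which a fixed disjunct is active and this equality is valid is itself Presburger-definable, so I would apply the first part of the proposition to decompose it into cells, then refine the resulting partition of $X$ into a common cell decomposition on which $f$ has the stated affine form.

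The main obstacle is the bookkeeping in the function step: one must verify that after disjunctive normal form the active disjunct on each cell really does contain at least one $y$-equality, and that the implicit parameters $p,n,r,s$ stay uniform across the chosen cell. This is where the rational slope $p/n$ with $p,n\in\mathbb{Z}$ (rather than arbitrary $Z$-coefficients) is forced: $n$ comes from the finitely many moduli in the atomic formulas of $\Gamma_f$, which are absolute integers, and refining by residue classes modulo $n$ as in the first part eliminates the case-splits and yields a uniform affine expression on each infinite cell.
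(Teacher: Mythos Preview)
The paper does not prove this proposition; it simply cites Cluckers, \textit{Presburger sets and $p$-minimal fields}. Your outline via quantifier elimination in the language $\{+,-,<,0,1,(D_n)_{n\ge 2}\}$ followed by refinement modulo a common multiple $N$ is exactly the standard argument and is what one finds in that reference, so for the set part your proposal is correct and aligned with the intended proof.

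For the function part there is one inaccuracy worth flagging. You assert that if the $y$-fiber of a single DNF conjunct is a singleton then some literal in that conjunct must be an equality $ny=px+c$. This is false: a conjunction of strict inequalities such as $px+c<ny$ and $ny<px+c+2$ already pins down $ny=px+c+1$ without any equality literal present, and once congruence literals are in play the gap can be any fixed standard integer. So the step ``singleton $\Rightarrow$ equality literal'' does not go through as stated.

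The repair is painless and stays within your framework. After passing to residue classes modulo $N$ in both variables, each active conjunct on a fixed $x$-class is a conjunction of inequalities $n_jy\lessgtr p_jx+c_j$; the fiber is then the interval between the tightest lower and upper constraints, intersected with a fixed residue class for $y$. For this to be a singleton uniformly on an infinite $x$-cell, the two extremal constraints must have the same slope in $x$ (otherwise the interval length varies unboundedly along the cell), and then $y$ equals the lower endpoint plus a fixed standard integer offset, i.e.\ $y=\tfrac{p}{n}(x-r)+s$ with $p,n\in\mathbb{Z}$ coming from the atomic formula and $s\in Z$. So the affine form with rational slope still drops out; you just have to allow it to come from a tight pair of inequalities rather than from an explicit equality literal. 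With this correction your argument is complete.
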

See for example \cite{cell-decomposition-in-Z} for a proof.

Given $a\in Z$ such that $a>n$ for all $n\in \mathbb{N}$
we denote $O_Z(a)=\{b\in Z\mid \text{ there exists }n\in\mathbb{N}, |b|<na\}$.
We denote $o_Z(a)=\{b\in Z\mid \text{ for all }n\in\mathbb{N}, n|b|<a\}$.
These are subgroups of the additive group $(Z,+)$.
\begin{prop}\label{Z-standard}
With the above notation $O_Z(a)/o_Z(a)\cong (\mathbb{R},+)$ as topological groups.
\end{prop}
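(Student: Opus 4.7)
The plan is to build an explicit isomorphism via a ``standard part'' map constructed from Dedekind cuts. For $b\in O_Z(a)$ set $\phi(b)=\sup\{p/q\in\mathbb{Q}:q>0,\ qb\ge pa\}\in\mathbb{R}$. This supremum is finite because $|b|<Na$ forces $\phi(b)\in[-N,N]$, and the set over which the sup is taken is downward closed in $\mathbb{Q}$ (since $p/q\le p'/q'$ with $q,q'>0$ gives $pq'\le p'q$, whence $qb\ge pa$ implies $q'b\ge p'a$), so $\phi(b)$ really is a Dedekind cut.

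Verifying that $\phi$ is a group homomorphism rests on the observation that $q_ib_i\ge p_ia$ for $i=1,2$ implies $q_1q_2(b_1+b_2)\ge(p_1q_2+p_2q_1)a$, so taking suprema yields $\phi(b_1+b_2)\ge\phi(b_1)+\phi(b_2)$; the reverse inequality follows symmetrically from the strict inequalities $q_ib_i<p_ia$ that hold whenever $p_i/q_i>\phi(b_i)$. The kernel equals $o_Z(a)$: if $b\in o_Z(a)$ then $\pm nb<a$ for every positive integer $n$, forcing $\phi(b)=0$, and conversely $\phi(b)=0$ reads off $nb<a$ and $-nb<a$ for all standard $n$. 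Surjectivity is the only step needing saturation of $Z$: given $r\in\mathbb{R}$, the partial type $\{qx\ge pa:p/q<r\}\cup\{qx\le pa:p/q>r\}$ over $\{a\}$ is finitely satisfiable, because any finite fragment reduces to finding an integer in an interval $[p_1a/q_1,p_2a/q_2]$ with $p_1/q_1<p_2/q_2$, whose length $(p_2q_1-p_1q_2)a/(q_1q_2)$ is nonstandard and so hits integers by elementary equivalence with $\mathbb{Z}$.

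For the topology, continuity of the induced bijection $\bar\phi:O_Z(a)/o_Z(a)\to\mathbb{R}$ follows because the preimage of each closed interval $[r_1,r_2]$ is the conjunction of the conditions $qb\ge pa$ for $p/q\le r_1$ and $q'b\le p'a$ for $p'/q'\ge r_2$, hence is type-definable relative to $O_Z(a)$. Part 3 of the first proposition of Section 2 (applied with $H=O_Z(a)$, $K=o_Z(a)$ inside $G=(Z,+)$) makes the quotient a Hausdorff topological group, and part 1 of the same proposition applied to the definable sets $[-na,na]$ (whose union is $O_Z(a)$) shows the quotient is $\sigma$-compact. Hence Proposition \ref{open-mapping-theorem} gives that $\bar\phi$ is open, so it is a topological group isomorphism. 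The main delicate point in the whole argument is the homomorphism property, where sharpness of the equality requires simultaneously using both approximations of the Dedekind cut, since each $\phi(b_i)$ is a supremum rather than an actual member of the cut; everything else reduces to unwinding the definitions and a single appeal to saturation.
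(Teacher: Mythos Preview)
Your argument is correct and essentially identical to the paper's: both define the standard-part map $b\mapsto\sup\{p/q:qb\ge pa\}$, check continuity by exhibiting (type\nobreakdash-)definable preimages of intervals, and deduce openness from $\sigma$-compactness of $O_Z(a)/o_Z(a)$ via Proposition~\ref{open-mapping-theorem}. The only differences are cosmetic---the paper obtains surjectivity by writing down an explicit section $\mathbb{Q}\to O_Z(a)/o_Z(a)$, $n/m\mapsto \tfrac{n}{m}(a-r_m)$, rather than invoking saturation, and in your downward-closure parenthetical the implication should read $q'b\ge p'a\Rightarrow qb\ge pa$ (you have it reversed).
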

\begin{proof}

First we note that $o_Z(a)$ is a convex
subgroup of the ordered group $O_Z(a)$, so $O_Z(a)/o_Z(a)$ is an ordered
abelian group.

Define a function $\mathbb{Q}\to O$ by $\frac{n}{m}\mapsto\frac{n}{m}(a-r_m)$,
where $n,m\in \mathbb{Z}$, $(n,m)=1$, $m>0$, $r_m\in\mathbb{N}$ is such that
$0\leq r_m<m$ and $a\equiv r_m \mod mZ$.
A straightforward check shows that the 
 composition of this function with the canonical projection $O_Z(a)\to O_Z(a)/o_Z(a)$
is an ordered group morphism, and that this morphism extends in a unique way
to an ordered group morphism $\mathbb{R}\to O_Z(a)/o_Z(a)$.
This morphism $\mathbb{R}\to O_Z(a)/o_Z(a)$ 
is an isomorphism of topological groups.
Indeed if we 
define $F:O_Z(a)\to \mathbb{R}$ as $F(b)=\text{Sup}\{\frac{n}{m}\mid
n,m\in\mathbb{Z}, m>0, na<mb\}$, then $F$ is the inverse.
Now observe $F^{-1}(\frac{n_1}{m},\frac{n_2}{m})=\{b\in Z\mid n_1a<mb<n_2a\}$
which is definable, so by definition of the logic topology,
$\bar{F}$ is continous.
Now notice that $o_Z(a)$ is type-definable, so $O_Z(a)/o_Z(a)$ is a Hausdorff topological group. Also, $O_Z(a)$ is a denumerable union of definable sets, so 
$O_Z(a)/o_Z(a)$ is $\sigma$-compact. As $\mathbb{R}$ is a locally compact
Hausdorff topological group, $\bar{F}$ is open by Proposition \ref{open-mapping-theorem} as required.
\end{proof}
\begin{plem}\label{def-in-Z}
If $G\subset Z$ is a definable subgroup of $Z$, then it is of the form
$G=nZ$ for a $n\in\mathbb{Z}$.
\end{plem}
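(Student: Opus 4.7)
The plan is to dispose of the trivial case $G = \{0\}$ first, then find the minimum positive element $k \in G$, show that $k$ is a standard integer, and finally conclude $G = kZ$ by transferring a suitable sentence from the standard model $\mathbb{Z}$. If $G = \{0\}$ take $n = 0$; otherwise $G$ is infinite, since $Z$ is torsion-free (as an ordered abelian group).

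First I would apply cell decomposition to $G$: being infinite, $G$ must have an infinite cell $(a,b) \cap (nZ + r)$ with $n \in \mathbb{Z}_{\geq 1}$, and two consecutive elements $x, x+n$ of this cell lie in $G$, so $n \in G$. I then define $k = \min\{g \in G : g > 0\}$, which exists because the least-number principle for definable nonempty sets bounded below is a first-order scheme of Presburger arithmetic, hence true in $Z$. Since $0 < k \leq n$, and since for each fixed standard $n$ the sentence $\forall z(0 < z \leq n \to z = 1 \lor \cdots \lor z = n)$ is a theorem of Presburger arithmetic, we conclude $k \in \{1,\dots,n\} \subset \mathbb{N}$.

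With $k$ now a standard positive integer, I would write $G = \phi(Z, \bar c)$ for some formula $\phi$ and parameters $\bar c$, and consider the sentence: for every parameter tuple $\bar y$, if $\phi(\cdot, \bar y)$ defines a subgroup of $Z$ whose smallest positive element equals $k$, then $\phi(Z, \bar y) = \{x : k \mid x\}$. Because $k$ is fixed and standard this is a first-order sentence (using Presburger's divisibility predicate $P_k$), and it holds in $\mathbb{Z}$ because every subgroup of $\mathbb{Z}$ with smallest positive element $k$ equals $k\mathbb{Z}$. By elementarity it holds in $Z$, and applied to $\bar c$ it yields $G = kZ$. The main obstacle is the standardness of $k$: were $k$ nonstandard, ``$k \mid x$'' would not be expressible by a single Presburger formula and the transfer step would collapse, which is why cell decomposition is invoked precisely to bound $k$ above by a standard integer.
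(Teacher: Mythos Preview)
Your proof is correct, and the key step---using cell decomposition to produce a standard positive integer inside $G$---is exactly what the paper does. Where you diverge is in the endgame: once the paper has a standard $m$ with $mZ\subset G$, it simply observes that $Z/mZ\cong\mathbb{Z}/m\mathbb{Z}$ is finite cyclic, so any intermediate group between $mZ$ and $Z$ is of the form $nZ$ for some $n\mid m$. Your route instead invokes the least-number principle to locate the minimal positive element $k$, bounds it by the standard $n$ from the cell to force $k\in\mathbb{N}$, and then runs a formula-specific transfer from $\mathbb{Z}$. Both finishes are valid; the paper's is a bit more direct (no need to name the defining formula or quantify over parameter tuples), while yours makes the role of elementarity more explicit. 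You could also shorten your last step: once $k$ is standard, Euclidean division by $k$ is first-order, so for any $g\in G$ write $g=kq+r$ with $0\le r<k$, note $r=g-kq\in G$, and conclude $r=0$ by minimality---this avoids the transfer of a sentence mentioning $\phi$.
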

\begin{proof}
Let $G=(a_1+S_1)\cup\cdots\cup (a_n+S_n)$
be a cell decomposition of $G$, where
for each $i$ $S_i=0$ or $S_i=(-b_i,c_i)\cap m_iZ$ for
$b_i,c_i\in Z_{>0}$ and $m_i\in \mathbb{Z}$ or $S_i=m_iZ$ or
$S_i=(-b_i,\infty)\cap m_iZ$ or $S_i=(-\infty,c_i)\cap m_iZ$.
Note that if $S_i$ is unbounded, then $\langle a_i +S_i\rangle=a_i\mathbb{Z} +m_iZ$ so $m_iZ\subset G$.
We note that there is at least one index $r$ such that $S_r$ is unbounded,
 otherwise $G$ is bounded
and so as it is definable it has a maximum, but $G$ is a group so this cant
happen. Then $m_rZ\subset G$ and as $Z/m_rZ\cong \mathbb{Z}/m_r\mathbb{Z}$
is finite cyclic we see that any intermediate group is of the form
$nZ$ for a $m_r|n$.
\end{proof}
\begin{plem}\label{type-definable-convex-subgroups}
If $H\subset Z$ is a type-definable convex subgroup, then
$H=0$ or $H=Z$ or $H=o_Z(a)$ or $H=\cap_{i\in I} o_Z(a_i)$ where $I$
is a bounded net such that $i<j$ implies $a_j\in o_Z(a_i)$.
\end{plem}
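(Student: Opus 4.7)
The plan is to proceed in three stages: first reduce to the case $0 \ne H \ne Z$ and observe $H$ cannot be definable; second, prove a key approximation lemma that every definable overset of $H$ contains some $o_Z(a)$ which still contains $H$; third, package the resulting family as a bounded decreasing net by exploiting the total order on the family $\{o_Z(a) : a > 0\}$.

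If $H$ is $0$ or $Z$ we are done, so assume $H$ is proper and nontrivial. Then $H$ cannot be definable, since Lemma \ref{def-in-Z} classifies definable subgroups of $Z$ as $nZ$, and of these only $0$ and $Z$ are convex. For the approximation lemma, given a definable $D \supset H$ I first symmetrize by setting
\[
D^s = \{x \in Z : \forall y \in Z, \ |y| \le |x| \to y \in D\},
\]
which is visibly definable, convex, symmetric, contained in $D$, and, by convexity of $H$, still contains $H$. Since $H$ is not definable, $H \subsetneq D^s$; pick $x \in D^s \setminus H$ and, replacing $x$ by $|x|$, assume $x > 0$. For any positive $h \in H$ and any standard $n$, $nh$ lies in $H$, so convexity of $H$ combined with $x \notin H$ forces $nh < x$; hence $H \subset o_Z(x)$. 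Conversely $o_Z(x) \subset \{|y| < x\} \subset D^s \subset D$, since $x \in D^s$. This proves the lemma with $a = x$.

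Writing $H = \bigcap_{\phi \in \Pi} \phi(Z)$ for a bounded family $\Pi$ of formulas, for each $\phi$ the lemma produces $a_\phi$ with $H \subset o_Z(a_\phi) \subset \phi(Z)$, and squeezing gives $H = \bigcap_{\phi \in \Pi} o_Z(a_\phi)$. The collection $\{o_Z(a) : a > 0\}$ of subgroups of $Z$ is totally ordered by inclusion: given $a, b > 0$, exactly one of $a \in o_Z(b)$, $b \in o_Z(a)$, or ``$a, b$ lie in the same archimedean class'' holds, corresponding to $o_Z(a) \subsetneq o_Z(b)$, $o_Z(a) \supsetneq o_Z(b)$, $o_Z(a) = o_Z(b)$ respectively. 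Thus $\{o_Z(a_\phi)\}_\phi$ is a totally ordered family of bounded cardinality; re-indexing by this chain, with ties for coincident subgroups broken by picking a single representative, yields a bounded totally ordered (hence directed) set $I$ and elements $a_i$ such that $i < j$ iff $o_Z(a_j) \subsetneq o_Z(a_i)$ iff $a_j \in o_Z(a_i)$, with $H = \bigcap_{i \in I} o_Z(a_i)$. If $I$ has a maximum element the intersection collapses to the case $H = o_Z(a)$; otherwise we land in the genuine net case.

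The main obstacle is the approximation lemma: the chosen $a$ must be simultaneously outside $H$ (so that $H \subset o_Z(a)$) and close enough to $H$ that $o_Z(a) \subset D$. The symmetrization $D \mapsto D^s$ is what makes both conditions achievable from a single element of $D^s \setminus H$, and relies essentially on the fact that convex symmetric definable subsets of $Z$ are simply balls $\{|y| \le c\}$.
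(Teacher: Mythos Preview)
Your proof is correct and follows essentially the same approach as the paper: both reduce to showing that any definable overset $D\supset H$ contains some $o_Z(a)\supset H$, by passing to a definable symmetric convex set between $H$ and $D$ and choosing a positive element outside $H$. The paper obtains this intermediate set by a one-line appeal to compactness, whereas you construct it explicitly via the symmetrization $D^s$; and you spell out the net-packaging step that the paper leaves implicit in the statement.
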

\begin{proof}
Let $H\subset D$, with $D$ a definable set. We need to see that 
there is $a\in Z$ such that $H\subset o_Z(a)\subset D$.
By compactness there is $H\subset S\subset D$ such that $S$ is definable 
symmetric and convex. 
If $S=H$ then $H=Z$ or $H=0$ by Lemma \ref{def-in-Z}.
Assume otherwise and take $a\in S\setminus H$ positive.
As $H$ is convex $h<a$ for all $h\in H$. As $H$ is a group,
$H\subset o_Z(a)\subset [-a,a]\subset D$. As required.
\end{proof}
If $n$ is a supernatural number then denote $nZ=\cap_{m\in\mathbb{N}, m|n}mZ$.
\begin{prop}\label{type-definable-subgroups-of-Z}
If $H\subset Z$ is a type-definable set then it is of the form
$H=C\cap nZ$ for $C$ its convex hull, which is a type-definable convex
group, and $n$ a supernatural number.
\end{prop}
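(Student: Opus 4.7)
The plan is to define $C$ as the convex hull of $H$ and $n$ as the supernatural number with $nZ=\bigcap\{mZ:m\in\mathbb{N},H\subset mZ\}$, and to show $H=C\cap nZ$. The inclusion $H\subset C\cap nZ$ is immediate from the definitions.

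First I verify that $C$ is a type-definable convex subgroup. Convexity holds by construction, and $C$ is a subgroup since $H$ is. For type-definability, I write $C=\bigcap\{V:V\text{ convex definable},H\subset V\}$: if $x\notin C$ with $x>0$, then $H\subset(-x,x)$, and by compactness there is a definable $U$ with $H\subset U\subset(-x,x)$; since $U$ is bounded, its convex hull $[\min U,\max U]$ is a convex definable superset of $H$ not containing $x$. Lemma \ref{type-definable-convex-subgroups} then classifies $C$.

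For the reverse containment, the key step is the following approximation claim: for every definable $D\supset H$, there exist a definable convex $V\supset H$ and $m\in\mathbb{N}$ with $H\subset mZ$ such that $V\cap mZ\subset D$. Given the claim, selecting $V_D,m_D$ for each such $D$ yields
\[H=\bigcap_{D\supset H\text{ def}}D=\bigcap_D(V_D\cap m_DZ)=\Bigl(\bigcap_D V_D\Bigr)\cap\Bigl(\bigcap_D m_DZ\Bigr)\supset C\cap nZ,\]
since each $V_D$ is convex containing $H$ hence contains $C$, and each $m_D$ divides $n$ by the construction of $n$.

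The main obstacle is the approximation claim, which I plan to prove by cell decomposition. Given $D\supset H$ (WLOG symmetric), refine its cell decomposition to a common modulus $m_0$. The image of $H$ in $Z/m_0Z$ is a subgroup $m_0'\mathbb{Z}/m_0\mathbb{Z}$ for some $m_0'\mid m_0$, so $H\subset m_0'Z$ and $m_0'\mid n$. Cells of $D$ whose residues are not multiples of $m_0'$ are disjoint from $H$ and may be ignored. The finitely many ``bad'' cells forming $m_0'Z\setminus D$ are each disjoint from $H$. For each bad cell $T$: if $T$ is disjoint from $C$, a suitably chosen convex definable neighborhood of $C$ missing $T$ serves as $V$; if $T$ meets $C$, then $T\cap H=\emptyset$ forces $T$'s arithmetic progression to contain an element outside $nZ$, and one can take $m\mid n$ with large enough prime-power factors so that $mZ$ bypasses $T$. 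Combining these choices over the finitely many bad cells yields the required $V$ and $m$.
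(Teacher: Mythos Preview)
Your overall plan---reduce to an approximation claim of the form ``for each definable $D\supset H$ there exist convex definable $V\supset H$ and $m\in\mathbb{N}$ with $H\subset mZ$ and $V\cap mZ\subset D$''---is the right target, and your treatment of the convex hull $C$ is fine. The gap is in the second half of the approximation claim, the case of a bad cell $T$ meeting $C$.

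You assert that ``$T\cap H=\emptyset$ forces $T$'s arithmetic progression to contain an element outside $nZ$.'' But this is exactly the statement $C\cap nZ\subset H$ applied to an element of $T\cap C$: you are saying that any $t\in C$ with $t\notin H$ must satisfy $t\notin nZ$. That is the nontrivial inclusion you are trying to prove, so the argument is circular as written. No independent reason is given for why a point of $C\cap m_0'Z$ missing $H$ must also miss $nZ$. Moreover, even granting that claim, the follow-up ``take $m\mid n$ so that $mZ$ bypasses $T$'' is unclear when $T$ is an infinite cell: an infinite arithmetic progression $(c,d)\cap(m_0Z+r)$ will meet $mZ$ for every finite $m$ once $(c,d)$ is long enough, so a single modulus cannot avoid it and you would need a further interaction with the convex cut, which you do not supply.

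The paper sidesteps this entirely by working on the \emph{inside} rather than analysing the complement. Given $D\supset H$, it first passes to symmetric definable $D_2\subset D_1$ with $D_2+D_2\subset D_1$ and $D_1-D_1\subset D$, then cell-decomposes $D_2$ and picks a cell $E_1$ whose intersection with $H$ is cofinal in $H$. Because $E_1\cap H$ is unbounded in $H$, the difference set $E_1-E_1$ already contains $mZ\cap C$ (with $m$ the modulus of $E_1$), and the sumset conditions force $mZ\cap C\subset D_1$ and then $H+(mZ\cap C)\subset D$. Since $H+(mZ\cap C)$ is a subgroup of $C$ containing $mZ\cap C$, it equals $kZ\cap C$ for some $k\mid m$, and automatically $H\subset kZ$. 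This produces the approximation $H\subset kZ\cap C\subset D$ directly, without ever needing to know in advance that points of $C\setminus H$ lie outside $nZ$. That sumset-plus-cofinal-cell trick is the missing idea in your argument.
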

\begin{proof}
Take $C$ the convex hull and $H\subset D$ a definable set.
Then there are $H\subset D_2\subset D_1$ such that $D_1-D_1\subset D$, 
and $D_2+D_2\subset D_1$ .
Then there is a cell decomposition of $D_2$, $D_2=E_1\cup\cdots\cup E_n$.
Then for one of the cells, say $E_1$, $E_1\cap H$ is unbounded above in $H$.
Then $mZ\cap C\subset E_1-E_1\subset D_1$ and $H+mZ\cap C=kZ\cap C\subset D$ for
some natural numbers $m,k$ natural numbers. This is what is required.
\end{proof}

\section{Subgroups of $Q_p$}\label{Qp}
We denote $Q_p$ a saturated model of inaccesible cardinal of the theory
of the valued field $\mathbb{Q}_p$. 
Its valuation group is denoted $Z$
and its valuation $v$.
We remark that $Z$ is a saturated model of Pressburger arithmetic 
of the same cardinal as $Q_p$.
Here we determine the type-definable groups of $Q_p$

For $Q_p$ there is a cell decomposition due to Denef 
which we will state in dimension 1.
\begin{prop}
If $Q_p$ is a $p$-adically closed field and
if $X\subset Q_p$ is definable then there exists pairwise disjoint $D_i$ with
$X=D_1\cup\cdots\cup D_n$ such that
$D_i$ are of the form $\{a_i\}$ or infinite and of the form 
$\{x\in Q_p\mid \alpha_i< v(x-a_i)<\beta_i,x-a_i\in c_iQ_p^n\}$,
or of the form $\{x\in Q_p\mid \alpha_i<v(x-a_i), x-a_i\in c_iQ_p^n\}$.
For some $\alpha_i,\beta_i\in Z\cup\{-\infty,\infty\}$, $c_i\in Q_p$
and $n_i\in\mathbb{Z}_{>0}$.
\end{prop}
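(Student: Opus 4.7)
The plan is to derive this from Macintyre's quantifier elimination theorem for $p$-adically closed fields, which says that every definable subset of $Q_p^m$ is a Boolean combination of sets of the form $\{x:p(x)=0\}$ and $\{x:P_n(p(x))\}$ for $p\in\mathbb{Z}[x_1,\dots,x_m]$ and $n\geq 1$, where $P_n$ is the predicate for being a nonzero $n$-th power. Taking $n$ to be the lcm of the exponents occurring in the Boolean combination defining $X$, the set $X$ is then determined by which coset in the finite set $\{0\}\cup Q_p^\times/Q_p^n$ each of finitely many polynomial values $p_i(x)$ lies in. It therefore suffices to partition $Q_p$ into cells of the advertised form on which all these cosets are constant.

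First I would handle the behavior near the finite set of $Q_p$-roots of the $p_i$, which give the singleton cells $\{a_i\}$. At a root $a\in Q_p$ of multiplicity $k$ in $p_i$, write $p_i(x)=(x-a)^k g(x)$ with $g(a)\neq 0$. Hensel's lemma implies that for $v(x-a)$ sufficiently large we have $g(x)/g(a)\in 1+p^N\mathbb{Z}_p\subset Q_p^n$ for a fixed $N=N(n)$, so on such a neighborhood the $n$-th power coset of $p_i(x)$ coincides with that of $g(a)(x-a)^k$. Thus the coset depends only on $v(x-a)\bmod n$ together with the class of $(x-a)$ in $Q_p^\times/Q_p^n$, which partitions a punctured neighborhood of $a$ into cells of the advertised shape $\{\alpha<v(x-a)<\beta,\ x-a\in cQ_p^{n}\}$.

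The main obstacle is treating the irreducible factors $q\in Q_p[x]$ of degree $\geq 2$, which have no roots in $Q_p$ to serve as natural centres. For such $q$, I would use Galois-invariance of the unique extension of $v$ to the splitting field: for $x\in Q_p$ the conjugates $\alpha_1,\dots,\alpha_d$ of a root satisfy $v_e(x-\alpha_i)=v_e(x-\alpha_j)$, so $v(q(x))=v(c)+d\cdot v_e(x-\alpha_1)$ is bounded above by a constant $d\cdot D$ depending only on $q$, where $D$ is the diameter of the Galois orbit. Consequently on any annulus $\{\gamma<v(x-a)<\delta\}$ around a fixed $a\in Q_p$, the function $v(q(x))$ takes only finitely many values, and on each level set a further Hensel refinement makes $q(x)Q_p^n$ constant. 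Intersecting the annular partitions coming from all polynomials $p_i$ (and those of the previous paragraph near each root) yields a finite disjoint decomposition of $Q_p$ into cells of the required form, completing the proof.
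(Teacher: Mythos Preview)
The paper does not prove this statement; it simply refers the reader to Denef's cell decomposition paper. Your strategy via Macintyre's quantifier elimination is indeed the standard route, and your local analysis near a $Q_p$-rational root is correct.

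The genuine gap is in the treatment of an irreducible factor $q$ of degree $d\ge 2$. Your Galois argument correctly yields $v_e(x-\alpha_i)=v_e(x-\alpha_j)$ for all $i,j$ and hence the upper bound $v_e(x-\alpha_1)\le D:=\min_{i<j}v_e(\alpha_i-\alpha_j)$, but the assertion that $v(q(x))$ takes only \emph{finitely many} values on an annulus $\{\gamma<v(x-a)<\delta\}$ is false. Whenever $v(x-a)<v_e(a-\alpha_1)$ one has $v_e(x-\alpha_1)=v(x-a)$ by the ultrametric inequality, so $v(q(x))=v(c)+d\cdot v(x-a)$ varies linearly and assumes unboundedly many values (nonstandardly many in a saturated $Q_p$). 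What is actually true---and what Denef exploits---is that $v(q(x))$ is \emph{piecewise linear} in $v(x-a)$, with a slope-$d$ region and a slope-$0$ region separated by the threshold $v_e(a-\alpha_1)$. On each piece one must still refine by the residue of $v(x-a)$ modulo $n$, and then control the angular component of $q(x)$, not merely its valuation, to pin down the coset $q(x)(Q_p^{\times})^n$; your ``further Hensel refinement'' does not explain this step.

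There are two related omissions. You do not say how the center $a\in Q_p$ is to be chosen for a factor with no $Q_p$-root, and you assume without argument that the cell systems produced for the various polynomials $p_i$ can simply be intersected: an intersection of two cells with distinct centers $a\neq a'$ is not obviously a finite union of cells of the required form, and showing this (via the nested-or-disjoint property of ultrametric balls) is exactly where much of the work in Denef's proof is concentrated.
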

See \cite{cell-decomposition} for a proof.

We will also use the analytic language. This is 
a language obtained by adding to the valued field language
one function symbol for each function $\mathbb{Z}_p^n\to \mathbb{Q}_p$
given by a power series $\sum_{\alpha}a_{\alpha}\bar{x}^\alpha$
where the sum is over the multi-indices $\alpha\in \mathbb{N}^n$
and $a_{\alpha}\in\mathbb{Q}_p$ are such that $a_{\alpha}\to 0$.
This function symbol is interpreted as
 $\mathbb{Q}_p^n\to \mathbb{Q}_p$ which is the given function 
in $\mathbb{Z}_p^n$ 
and $0$ outside. 
A model of this theory will be denoted $Q_p^{an}$. In
$Q_p^{an}$ there is also a cell decomposition due to Cluckers which in dimension 1 is as follows:
\begin{prop}
If $X\subset Q_p^{an}$ is a definable set, there exist a partition into
disjoint sets $X=D_1\cup\dots\cup D_n$ such that
$D_i=\{x\in Q_p^{an}\mid \alpha_i<v(x-a_i)<\beta_i, x-a_i\in c_iQ_p^n\}$.
\end{prop}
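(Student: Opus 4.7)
The plan is to reduce the statement to Denef's algebraic cell decomposition (the previous proposition) by combining analytic quantifier elimination with the $p$-adic Weierstrass preparation theorem. The underlying idea is that, in the analytic language, every atomic formula can be reduced on each sufficiently small ball to one involving only polynomials, at which point Denef's result produces annular cells.

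First I would appeal to the analytic quantifier elimination of Denef--van den Dries, which expresses every definable subset of $Q_p^{an}$ as a Boolean combination of sets of the form $\{x : v(f(x)) \leq v(g(x))\}$ and $\{x : f(x) \in \lambda Q_p^n\}$, where $f,g$ are analytic terms and $\lambda \in Q_p^{an}$. Since the common refinement of two annular decompositions is again annular (intersections of annuli are annuli, and the complement of an annulus inside a larger annulus is a finite disjoint union of annuli), it suffices to produce an annular decomposition of each atomic set. For such an atomic set I would apply Weierstrass preparation on the valuation ring to factor each analytic term as $f(x) = u(x) P(x)$ with $u$ a unit and $P$ a polynomial; since $v(u(x)) = 0$ identically and $u(x)$ has constant $n$-th power class on small enough sub-balls, the atomic condition in $f$ reduces on each such sub-ball to the corresponding condition in $P$, to which Denef's theorem applies. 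Outside the valuation ring the analytic term vanishes identically, and this outside region is itself the annular cell $\{v(x) < 0\}$.

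Point cells produced by Denef's decomposition would correspond to zeros of $P$ on the valuation ring (hence to a finite set of zeros of $f$), and can be absorbed into adjacent annular cells or handled as a finite exceptional set of degenerate annuli. One-sided infinite cells from the algebraic reduction are bounded on the inside by the support condition $\{v(x) \geq 0\}$, so the surviving cells are indeed two-sided annuli up to the allowance $\alpha_i = -\infty$, $\beta_i = +\infty$ implicit in the statement.

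The main obstacle I would anticipate is arranging Weierstrass preparation uniformly over a definable family of centers, since analytic terms may be nested and the centers of preparation vary with parameters. This calls for a parametric version of Weierstrass preparation together with an induction on term complexity, both of which are classical but technically involved. A secondary difficulty is the bookkeeping required to keep the decomposition purely annular through the Boolean manipulations, which I would handle by greedy refinement at each step, using the closure of annular decompositions under finite Boolean operations modulo finitely many points.
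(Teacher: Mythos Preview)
The paper does not prove this proposition at all; it simply cites Cluckers \cite{analytic-cell-decomposition}. So there is no in-paper argument to compare your sketch against.

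On its own merits, your outline is headed in the right direction for dimension one --- Weierstrass preparation on $Z_p$ does reduce a single restricted power series to a unit times a polynomial, and the unit's $n$-th power class is determined by finitely many of its coefficients, hence by a polynomial congruence on $x$. But there is a concrete error in your Boolean-closure step: the claim that ``intersections of annuli are annuli'' is false when the centers differ. By the ultrametric, two balls are either nested or disjoint, so intersecting the annulus $\{\alpha<v(x-a)<\beta\}$ with $\{\alpha'<v(x-b)<\beta'\}$ can produce a ball with two disjoint sub-balls removed --- a Swiss cheese, not a single annulus. One can further decompose such a set into annuli, but carrying the residue conditions $x-a_i\in c_i Q_p^n$ through that process is precisely the content of the theorem you are proving, so invoking closure of annular decompositions under Boolean operations here is circular.

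The clean way to salvage your approach in dimension one is to observe that once each analytic term has been prepared, every atomic condition is equivalent (on $Z_p$, and trivially outside) to an \emph{algebraic} condition with parameters; hence the entire set $X$ is definable in the valued-field language, and Denef's theorem applies to $X$ directly --- no separate Boolean-closure argument is needed. The genuine difficulty, which you correctly flag at the end but do not resolve, is doing this uniformly when terms are nested or involve nonstandard parameters; that requires either a parametric preparation theorem or working first in $\mathbb{Q}_p$ and transferring the uniform cell decomposition schema by elementary equivalence. Cluckers' proof handles this by proving a preparation theorem for definable functions rather than by reducing to the algebraic case atom-by-atom.
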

See \cite{analytic-cell-decomposition}

\begin{plem}
Let $b\in Q_p^{\times}$ and $\alpha,\gamma\in Z$ be such that
$\alpha\leq v(b)<\gamma$. Let $n\in\mathbb{Z}$ such that $n>0$.
Denote $S=\{c\mid v(c)\geq \alpha, v(c-b)<\gamma,\text{ and there exists }
y,c=b-by^n\}$.
Then there exists a $\beta\in Z$ and $m\in\mathbb{N}$ such that
Then $D_{\beta+m}\subset S-S$ and $S\subset D_{\beta}$
\end{plem}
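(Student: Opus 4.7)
The plan is to take $\beta = \alpha$, making $S \subset D_\alpha$ immediate from the condition $v(c) \geq \alpha$, and to prove $D_{\alpha+m} \subset S - S$ for some standard $m \in \mathbb{N}$ by fixing a base point $s_0 \in S$ with $v(s_0)$ close to $\alpha$ and using Hensel's lemma to realise every sufficiently small $c$ as $(s_0+c) - s_0$ with $s_0+c \in S$.

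First, using Presburger arithmetic in $Z$, choose $\eta_0 \in Z$ minimal subject to $n\eta_0 \geq \alpha - v(b)$, so that $r := n\eta_0 - (\alpha - v(b)) \in \{0,\dots,n-1\}$; since $\alpha \leq v(b)$, also $\eta_0 \leq 0$. By saturation pick $y_0 \in Q_p$ with $v(y_0) = \eta_0$ (when $\eta_0 = 0$, additionally arrange $y_0 \not\equiv 1 \pmod{\mathfrak{m}}$, so that always $v(1 - y_0^n) = n\eta_0$). Set $s_0 := b(1 - y_0^n)$; then $v(s_0) = v(s_0 - b) = v(b) + n\eta_0 = \alpha + r$, so $v(s_0) \geq \alpha$ and $v(s_0 - b) \leq v(b) < \gamma$, giving $s_0 \in S$.

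Next, for $c$ with $v(c)$ large, apply Hensel's lemma to $f(y) = y^n - (y_0^n - c/b)$ at $y = y_0$. The hypothesis $v(f(y_0)) > 2v(f'(y_0))$ reduces to $v(c) > v(b) + 2v(n) + 2(n-1)\eta_0$. Multiplying by $n$ and substituting $n\eta_0 = \alpha - v(b) + r$ gives
\[ n\bigl[v(b) + 2(n-1)\eta_0\bigr] = (2-n)(v(b) - \alpha) + n\alpha + 2(n-1)r, \]
and for $n \geq 2$ the term $(2-n)(v(b)-\alpha)$ is non-positive, so the right-hand side is at most $n\alpha + 2(n-1)^2 \leq n[\alpha + 2(n-1)]$, whence $v(b) + 2(n-1)\eta_0 \leq \alpha + 2(n-1)$ in $Z$. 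Thus the Hensel threshold is bounded by $\alpha + 2n + 2v(n)$, and taking $m := 2n + 2v(n) + 1$ suffices: the root $y$ produced satisfies $v(y - y_0) > \eta_0$, hence $v(y) = \eta_0$, so $s_1 := b(1 - y^n) = s_0 + c$ obeys the same estimates as $s_0$ and lies in $S$, yielding $c = s_1 - s_0 \in S - S$. The case $n=1$ needs no Hensel step: $y = y_0 - c/b$ directly gives $s_1 = s_0 + c \in S$ as soon as $v(c) > \alpha$, so $m = 1$ works there.

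The main obstacle is the arithmetic in the Presburger group $Z$. Since $(\alpha - v(b))/n$ need not lie in $Z$, one is forced to round to $\eta_0$ at the cost of an error $r < n$; the delicate point is to verify that the resulting Hensel threshold remains within $\alpha$ plus a standard natural number depending only on $n$ and $p$, even when $v(b) - \alpha$ is non-standard. This relies on the cancellation $(2-n)(v(b) - \alpha) \leq 0$ for $n \geq 2$, which turns the a priori potentially unbounded quantity $2(n-1)\eta_0$ into one bounded above by $2(n-1)$.
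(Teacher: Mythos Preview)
Your argument is correct and reaches the conclusion by a route somewhat different from the paper's. The paper splits into cases: if $v(b)-\alpha$ is a standard integer it uses Hensel at $y=1$ to get $D_{v(b)+2v_p(n)+1}\subset S$ directly; otherwise it invokes definability of $v(S\setminus\{0\})$ in $Z$ to extract $\beta=\min v(S\setminus\{0\})$, picks some $b'\in S$ realising that minimum, and runs Hensel around the corresponding $y$-value. You instead keep $\beta=\alpha$ throughout and \emph{construct} the base point $s_0$ explicitly by taking $\eta_0=\lceil(\alpha-v(b))/n\rceil$, thereby avoiding both the case split and the appeal to the existence of a definable minimum. The price is the Presburger bookkeeping with the remainder $r$ and the inequality $(2-n)(v(b)-\alpha)\le 0$, but the payoff is a single uniform bound $m$ depending only on $n$ and $p$.

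One small correction: in the $\eta_0=0$ case the side condition ``$y_0\not\equiv 1\pmod{\mathfrak m}$'' is insufficient to force $v(1-y_0^n)=0$ (e.g.\ $p=7$, $n=3$, $y_0\equiv 2$ gives $y_0^3\equiv 1$), and for $p=2$ no unit satisfies it at all. Fortunately you never actually need the equality $v(s_0)=\alpha+r$: the argument uses only $v(s_0)\ge\alpha$ (which follows from $v(1-y_0^n)\ge 0$ whenever $v(y_0)=0$) and $v(s_0-b)=v(-by_0^n)=v(b)+n\eta_0=\alpha+r\le v(b)<\gamma$ (which is independent of $v(1-y_0^n)$). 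So you may simply drop the parenthetical, take any $y_0$ with $v(y_0)=\eta_0$ (even $y_0=1$ when $\eta_0=0$), and the verification that $s_1=s_0+c\in S$ goes through unchanged since $v(c)\ge\alpha+m>\alpha+r=v(s_0-b)$.
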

\begin{proof}
Denote $D_{\rho}=\{b\mid v(b)\geq \rho\}$ and $E_{\rho}=\{b\mid v(a-b)\geq \rho\}$.
Note that by Hensel's lemma $D_{v(a)+2v_p(n)+1}\subset S$.
If $\alpha=v(a)+N$ for some $N\in\mathbb{Z}$ then we are done.
Now take the set $M=v(S\setminus\{0\})\subset Z$. This is a definable
subset of $Z$, bounded below by $\alpha$, so it has a minimum $\beta$.
Now it is clear that we may replace $\alpha$ by $\beta$ and assume
$\alpha<v(a)+n$ for all $n\in\mathbb{Z}$. 
Now let $b\in S$ be such that $v(b)=\alpha$. Then by Hensel's lemma
$\{c\in Q_p\mid v(c-b)>\alpha+2n+1\}\subset S$. But then
$D_{\alpha+2n+1}\subset S-S$ which finishes the proof.
\end{proof}
\begin{plem}
Let $a\in Q_p^{\times}$ and $\gamma\in Z$ be such that
$v(a)<\gamma$. Let $n\in\mathbb{Z}$, $n>0$.
Let $S=\{b\in Q_p\mid v(a-b)<\gamma \text{ and there exists }y, b=a-ay^n$\}.
Then $Q_p=S-S$ or $S$ is as in the previous Lemma.
\end{plem}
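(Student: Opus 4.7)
The plan is to show that the first alternative $Q_p = S - S$ always holds. The defining constraint $v(a-b)<\gamma$ on $S$ translates through $a-b = ay^n$ into $nv(y)<\gamma-v(a)$, which only bounds $v(y)$ from above; thus $y$ may be chosen with arbitrarily negative valuation. Since $v(a - ay^n) = v(a) + nv(y)$, this means $v(S)$ is unbounded below in $Z$, so $S$ cannot be contained in any $D_\beta$ and the ``$S \subset D_\beta$'' conclusion of the previous lemma is incompatible with the present $S$. Hence I must prove $Q_p = S - S$ directly.

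To do so, fix $z \in Q_p$ and set $w = z/a$. For $s_i = a - ay_i^n \in S$ one has $s_1 - s_2 = a(y_2^n - y_1^n)$, so the task reduces to producing $y_1, y_2 \in Q_p^{\times}$ with $y_2^n - y_1^n = w$ and $nv(y_i) < \gamma - v(a)$ for $i = 1, 2$. The idea is to fix $y_1$ with $v(y_1)$ so negative that both $nv(y_1) < \gamma - v(a)$ and $v(w) - nv(y_1) > 2 v_p(n)$ hold simultaneously; both conditions are upper bounds on $v(y_1)$ in $Z$, so such $y_1$ exists. Then $y_1^n + w = y_1^n(1 + w/y_1^n)$ with $v(w/y_1^n) > 2v_p(n)$, and Hensel's lemma applied to $X^n - (1 + w/y_1^n)$ near $X = 1$ yields a unit $u \in Q_p^{\times}$ with $u^n = 1 + w/y_1^n$. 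Setting $y_2 = y_1 u$ gives $y_2^n = y_1^n + w$ and $v(y_2) = v(y_1)$, so both $y_i$ satisfy the membership constraint. Then $s_1 - s_2 = aw = z$, so $z \in S - S$.

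The main obstacle is the simultaneous choice of $y_1$ meeting both constraints, but since each is an upper bound on $v(y_1)$ in $Z$ and the value group has no minimum, a suitable $y_1$ exists in $Q_p$. The degenerate case $z = 0$ (with $v(w) = \infty$) is automatic: the Hensel hypothesis is trivial, any $y_1$ with $nv(y_1) < \gamma - v(a)$ works, and taking $u = 1$, $y_2 = y_1$ yields $s_1 - s_2 = 0$. Once $y_1$ is in hand, Hensel's lemma and the identity $s_1 - s_2 = a(y_2^n - y_1^n)$ complete the proof mechanically.
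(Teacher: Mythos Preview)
Your proof is correct and in fact establishes more than the lemma asserts: you show that the first alternative $Q_p = S - S$ always holds, via a direct Hensel computation solving $s_1 - s_2 = z$ for an arbitrary $z$. The paper proceeds differently, by a case split on whether $M = v(S\setminus\{0\})$ is bounded below in $Z$. In the unbounded case it recycles the Hensel argument from the previous lemma (a ball of appropriate radius around each $b\in S$ lies in $S$, hence $D_{\beta+2n+1}\subset S-S$ for every $\beta\in M$, and the union over $\beta$ gives all of $Q_p$); in the bounded case it sets $\alpha=\min M$ and observes that $S$ then falls under the hypotheses of the previous lemma. Your observation that $v(y)$ is only bounded above, so that $v(b)=v(a)+nv(y)$ can be made arbitrarily negative, shows that the bounded case never occurs; thus the paper's dichotomy is slightly redundant, while your argument identifies and proves the sharp statement directly. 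The trade-off is that the paper's route reuses the previous lemma's machinery verbatim, whereas you run a fresh (but entirely elementary) Hensel step.
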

\begin{proof}
Take $M=v(S\setminus\{0\})$ if this set is not bounded below, then the argument
of the previous proof shows $D_{\beta+2n+1}\subset S-S$ for all $\beta\in M$
so $S-S=Q_p$. If $M$ is bounded below, then we are in the situation
of the previous Lemma.
\end{proof}
\begin{plem}
Let $S\subset Q_p$ be a definable set.
Then either $S$ is finite or $S-S=Q_p$ or
there exists $\alpha\in Z$ and $n\in\mathbb{N}$ and $X\subset S $
finite such that 
$S\subset X+D_{\alpha}$ and $D_{\alpha+n}\subset S-S$.
\end{plem}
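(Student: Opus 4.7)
The plan is to reduce to individual cells via Denef's one-variable cell decomposition, handle each infinite cell by the Hensel-based arguments packaged in the two preceding lemmas, and combine the cell-level conclusions.

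Write $S = C_1 \sqcup \cdots \sqcup C_r$ where each $C_i$ is a point or an infinite cell of the form $\{x : \alpha_{0i} < v(x - a_i) < \beta_{0i},\, x - a_i \in c_i Q_p^{n_i}\}$. If every $C_i$ is a point then $S$ is finite. Otherwise, for each infinite cell I would show that either $C_i - C_i = Q_p$ or there exist $\beta_i \in Z$, $m_i \in \mathbb{N}$, and $x_{1,i} \in C_i$ with $C_i \subset x_{1,i} + D_{\beta_i}$ and $D_{\beta_i + m_i} \subset C_i - C_i$.

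For the cellwise step, fix $x_{1,i} \in C_i$ and parametrize cell elements as $u = x - a_i = u_{1,i} y^{n_i}$ with $u_{1,i} = x_{1,i} - a_i$; the shift $C_i - x_{1,i}$ then consists of elements $-u_{1,i}(1 - y^{n_i}) = b - by^{n_i}$ with $b = -u_{1,i}$, so the cell constraints on $v(u)$ translate to constraints on $v(y)$. When $\alpha_{0i} = -\infty$, $v(y)$ has no lower bound and the shifted set fits the second of the preceding lemmas; since $v((C_i - x_{1,i}) \setminus \{0\})$ is unbounded below (by picking $u$ of arbitrarily negative valuation), one lands in its $S - S = Q_p$ branch, so $C_i - C_i = Q_p$. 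When $\alpha_{0i}$ is finite, I choose $x_{1,i}$ so that $v_{0,i} := v(x_{1,i} - a_i)$ is minimal --- a minimum which exists and lies in $[\alpha_{0i}+1, \alpha_{0i} + n_i]$ --- and apply the first of the preceding lemmas with $\alpha = \alpha_{0i}+1$ and $\gamma = \beta_{0i}$ (truncating $\beta_{0i}$ to a finite value if it is infinite), obtaining $\beta_i = v_{0,i}$ and a standard $m_i = 2v_p(n_i) + 1$.

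If some cell gives $S - S = Q_p$ we are done. Otherwise I pick $j$ minimizing $\beta_i$ over infinite cells, set $\alpha = \beta_j \in Z$ and $n = m_j \in \mathbb{N}$, and let $X$ be the union of the chosen $x_{1,i}$ with the points of the point cells. Then $\beta_i \geq \alpha$ gives $C_i \subset x_{1,i} + D_{\beta_i} \subset x_{1,i} + D_\alpha$ for every infinite $i$, point cells sit in $a_i + D_\alpha$, so $S \subset X + D_\alpha$; meanwhile $D_{\alpha + n} = D_{\beta_j + m_j} \subset C_j - C_j \subset S - S$. The main obstacle is the cellwise analysis --- verifying that the second preceding lemma's dichotomy always resolves to $Q_p$ when $\alpha_{0i} = -\infty$, and choosing $x_{1,i}$ carefully in the bounded-below case so that $\beta_i$ stays within a standard distance of $\alpha_{0i}$, which ensures together with the standardness of $m_i = 2v_p(n_i)+1$ that the combined $n$ is standard.
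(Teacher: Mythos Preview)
Your argument is correct and follows the same route as the paper: decompose $S$ into cells, feed each infinite cell (after translating by a chosen point) into the two preceding lemmas, and then combine by taking $\alpha$ to be the minimum of the cellwise $\beta_i$ with $n=m_j$ for the minimizing index $j$. Your closing worry about keeping each $\beta_i$ within a standard distance of $\alpha_{0i}$ is unnecessary: because you set $n=m_j$ and the lemma already returns $m_j\in\mathbb{N}$, the combination works no matter where the individual $\beta_i$ land in $Z$, so no special choice of $x_{1,i}$ is needed beyond membership in $C_i$.
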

\begin{proof}
$S$ decomposes as a finite union of cells $a_i+S_i$ where $S_i$ are as
in the previous two Lemmas or are $0$. If all the $S_i$ are $0$ or
there is $i$ such that $Q_p=S_i-S_i$ then we are done.
Otherwise take $\alpha_i$ as in the previous lemmas for every $i$
such that $S_i\neq 0$. Then for $\alpha=\text{max}_i\alpha_i$ we obtain the 
result.
\end{proof}
\begin{prop}\label{type-def-subgroups-of-Qp}
If $G$ is a definable subgroup of $Q_p$ then it is of the form $D_{\alpha}$,
$0$ or $Q_p$. A type-definable subgroup of $Q_p$ is a bounded intersection
of definable subgroups of $Q_p$.
\end{prop}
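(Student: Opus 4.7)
I will split by definable vs.\ type-definable, both relying on the preceding lemma (the three-alternative structural result for definable $S \subseteq Q_p$).

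For the definable case I apply the preceding lemma to $G$ directly. Since $G - G = G$, the three alternatives are: $G$ finite (forcing $G = \{0\}$ because $(Q_p,+)$ is torsion-free); $G = Q_p$; or $G \subseteq X + D_\alpha$ with $X$ finite and $D_{\alpha+n} \subseteq G$. In the last case $v(G \setminus \{0\})$ is a definable, bounded-below subset of $Z$ and so attains a minimum $\beta \in Z$, realized by some $g_0 \in G$; hence $G \subseteq D_\beta$. Since the residue field is $\mathbb{F}_p$, the quotient $D_\beta / D_{\alpha+n}$ is cyclic of order $p^{\alpha+n-\beta}$ (the class of any valuation-$\beta$ element generates it), so $G / D_{\alpha+n}$ must be one of the subgroups $D_{\beta+j}/D_{\alpha+n}$; the presence of $g_0$ rules out $j \geq 1$, leaving $G = D_\beta$.

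For the type-definable case, write $G = \bigcap_{i \in I} D_i$ with $|I|$ bounded and each $D_i \supseteq G$ definable, setting aside the trivial cases $G \in \{0, Q_p\}$. The core claim is that for every definable $D \supseteq G$ with $D \neq Q_p$ there is a definable subgroup $H$ with $G \subseteq H \subseteq D$; granting this, replacing each $D_i$ by such an $H_i$ presents $G = \bigcap_i H_i$ as a bounded intersection of definable subgroups. To construct $H$, use compactness to interpose symmetric definable $F \subseteq E$ between $G$ and $D$ with $F + F \subseteq E$ and $E + E \subseteq D$. The hypotheses $G \neq \{0\}$ and $D \neq Q_p$ rule out the degenerate outcomes of the preceding lemma applied to $F$, so $F \subseteq X + D_\alpha$ with $X$ finite and $D_{\alpha+n} \subseteq F - F \subseteq E \subseteq D$. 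The inclusion $G \subseteq X + D_\alpha$ makes $G + D_\alpha$ a union of finitely many $D_\alpha$-cosets, hence a definable subgroup of finite index over $D_\alpha$; Part 1 then gives $G + D_\alpha = D_\beta$ with $\alpha = \beta + m$ and $m \in \mathbb{N}$, so $\beta \in \mathcal{A} := \{\alpha' \in Z : G \subseteq D_{\alpha'}\}$.

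The argument now bifurcates on whether $\mathcal{A}$ has a maximum. If not, downward closure in the discrete order $Z$ forces $\alpha' \in \mathcal{A} \Rightarrow \alpha' + 1 \in \mathcal{A}$ (any strictly larger element of $\mathcal{A}$ is $\geq \alpha' + 1$ by discreteness), so $\mathcal{A}$ is closed under addition of $\mathbb{N}$ and $\gamma := \alpha + n = \beta + m + n \in \mathcal{A}$; then $H := D_\gamma \subseteq D$ works. If $\mathcal{A}$ has maximum $\beta^*$, pick $g_0 \in G$ with $v(g_0) = \beta^*$ (using $\beta^* + 1 \notin \mathcal{A}$) and aim to show $G = D_{\beta^*}$, so that $H = G$ suffices; this ``big ball'' step is the main obstacle. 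Given $x \in D_{\beta^*}$ and any definable $D' \supseteq G$, construct $F', E'$ from $D'$ as before with parameters $\alpha', n', \beta', m'$ where $\beta' \leq \beta^*$, and approximate $u := x/g_0 \in D_0$ by a standard integer $c \in \mathbb{Z}$ with $v(u - c) \geq m' + n'$ (available because $D_0/D_{m'+n'} \cong \mathbb{Z}/p^{m'+n'}\mathbb{Z}$). Then $v(x - c g_0) \geq \beta^* + m' + n' \geq \alpha' + n'$, placing $x - c g_0 \in D_{\alpha' + n'} \subseteq E'$ while $c g_0 \in G \subseteq F' \subseteq E'$, so $x \in E' + E' \subseteq D'$. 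Since this holds for every definable $D' \supseteq G$, type-definability of $G$ gives $x \in G$.
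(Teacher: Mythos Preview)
Your argument is essentially correct and follows a different route from the paper's. The paper treats both assertions in one pass: given type-definable $L$ and any definable $S\supset L$, it chooses symmetric $T$ with $3T\subset S$, applies the trichotomy lemma to $T$, then studies the type-definable group $L+D_{\alpha+n}$ inside the abstract group $\langle X\cup D_\alpha\rangle$; a compactness argument forces this group into some ball $D_{\alpha-m}$ with $m$ standard, after which the finite cyclic quotient $D_{\alpha-m}/D_{\alpha+n}$ pins it down as a ball contained in $3T\subset S$. Taking $S=L$ then recovers the definable statement as a corollary. You instead establish the definable case first and bootstrap. Your key simplification is that $G+D_\alpha$, being a finite union of $D_\alpha$-cosets, is already definable, so Part~1 gives $G+D_\alpha=D_\beta$ with $\alpha-\beta\in\mathbb{N}$ directly; this sidesteps the paper's structural analysis of $\langle X\cup D_\alpha\rangle$. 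Your dichotomy on whether $\mathcal{A}$ has a maximum does not appear in the paper; the ``no max'' branch is the live one, while in the ``max'' branch you actually prove more than required (that $G$ itself is already a ball).

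One point in your definable case needs a sentence of justification. You assert that $D_\beta/D_{\alpha+n}$ is cyclic of order $p^{\alpha+n-\beta}$, but in the monster model this is a genuine finite cyclic group only when $\alpha+n-\beta\in\mathbb{N}$; for nonstandard index the quotient has many subgroups not of the form $D_{\beta+j}/D_{\alpha+n}$, and your classification of intermediate subgroups would fail. The needed bound does hold, by the same mechanism you exploit later in the type-definable case: from $G\subset X+D_\alpha$ with $X$ finite one gets $[G:G\cap D_\alpha]\le |X|$, so $kg_0\in D_\alpha$ for some standard $k\le |X|$, whence $\alpha-\beta\le v_p(k)$ is standard. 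Adding this remark closes the gap.
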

\begin{proof}
Let $L$ be a type-definable subgroup of $Q_p$, and $L\subset S\subset Q_p$
a definable set. We prove that either $Q_p=S$ or $L=0$ or there exists
$\alpha\in Z$ such that $L\subset D_{\alpha}\subset S$.
Let $T$ be a symetric definable set with $L\subset T$ and $3T\subset S$.
If $T$ is finite then $L$ is a finite group, and as $Q_p$ is torsion free
,it is trivial. If $T-T=Q_p$ then $S=Q_p$ as required. So assume
by the previous Lemma that $T\subset D_{\alpha}+X$ and $D_{\alpha+n}\subset 2T$.
Now $L+D_{\alpha+n}$ is a type-definable group and satisfies 
$L+D_{\alpha+n}\subset D_{\alpha}+X$ and $L+D_{\alpha+n}\subset S$.
Now the group generated by $D_{\alpha}+X$ is of the form
$D_{\alpha+m}\oplus\oplus_i \mathbb{Z}a_i$ where $a_i$ and $a_i-a_j$
have valuation $<\alpha+n$ for all $n\in\mathbb{Z}$. By compactness 
a type-definable subgroup is then a subgroup of $D_{\alpha+m}$ and 
as $D_{\alpha+m}/D_{\alpha+n}\cong \mathbb{Z}/p^{n-m}\mathbb{Z}$ we see that every intermediate
subgroup is of the form $D_{\alpha+k}$, as required.
\end{proof}
\begin{prop}\label{generic-to-subgroup-of-Qp}
Let $G$ be an interpretable group, such that there exists 
$L\subset G$ a type-definable group of bounded index and 
a type-definable injective group morphism $\phi:L\to (Q_p,+)$.
Then there exists a definable group $H\subset G$ 
containing $L$ and an extention of $\phi$
to an injective definable group morphism $\phi:H\to Q_p$ with image
$0$, $Q_p$ or $D_{\alpha}$.
\end{prop}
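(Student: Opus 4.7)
The plan is to work with the inverse $\psi := \phi^{-1}: \phi(L) \to L \subset G$, which is also a type-definable injective group morphism. A key preliminary observation is that $\phi(L)$, being the image of a type-definable set under a map with type-definable graph, is itself type-definable (by the usual saturation argument: $y \in \phi(L)$ iff the partial type asserting that some $x \in L$ maps to $y$ is consistent, and this condition is itself a partial type in $y$). Since $\phi(L)$ is a subgroup of $Q_p$, Proposition \ref{type-def-subgroups-of-Qp} tells us that $\phi(L)$ is either $0$, all of $Q_p$, or a bounded intersection $\bigcap_i D_{\alpha_i}$.

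Next I would apply compactness to enlarge the graph of $\psi$ to a definable relation $R \subset Q_p \times G$ satisfying three conditions simultaneously: (i) $R$ is a partial function; (ii) $R$ is injective as a partial function; (iii) whenever $(x,y), (x',y'), (x+x',y'')$ all lie in $R$, one has $y''=y\cdot y'$. Each of the three conditions holds on the graph of $\psi$, so a single definable extension realizing all three exists by compactness. Projecting $R$ to its first coordinate gives a definable domain $V \supset \phi(L)$, and I let $\tilde{\psi}:V\to G$ be the corresponding definable partial function.

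In the main case $\phi(L)=\bigcap_i D_{\alpha_i}$, a further compactness step yields a single $\alpha$ with $\phi(L)\subset D_\alpha \subset V$: from $\bigcap_i D_{\alpha_i}\subset V$, some finite subintersection lies in $V$, and since the $D_\beta$'s form a chain this subintersection equals $D_\alpha$ for $\alpha$ the maximum of the chosen indices. Setting $H:=\tilde{\psi}(D_\alpha)$ produces a definable subset of $G$. Because $D_\alpha+D_\alpha=D_\alpha\subset V$, property (iii) makes $\tilde{\psi}|_{D_\alpha}$ a genuine group morphism $D_\alpha\to G$, injective by (ii); thus $H$ is a definable subgroup of $G$. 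Since $\phi(L)\subset D_\alpha$ and $\tilde{\psi}|_{\phi(L)}=\psi$, we get $L=\psi(\phi(L))\subset H$, and the inverse of $\tilde{\psi}|_{D_\alpha}$ is the desired definable injective group morphism $H\to D_\alpha\subset Q_p$ extending $\phi$.

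The degenerate cases are straightforward: if $\phi(L)=0$ then $L$ is trivial and we take $H=L$; if $\phi(L)=Q_p$ then $\phi(L)$ is already definable and the same construction runs with $V=Q_p$ throughout. The main subtlety is the simultaneous compactness argument producing $R$ with all of (i)--(iii) at once, together with the observation that the identity $D_\alpha+D_\alpha=D_\alpha$ is exactly what promotes the partial morphism to a genuine one on $D_\alpha$.
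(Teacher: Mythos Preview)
Your proof is correct and follows essentially the same approach as the paper. The only difference is a harmless change of direction: the paper extends $\phi$ by compactness to a definable injective map on definable sets $L\subset U_1\subset U_0$ with $\phi(a-b)=\phi(a)-\phi(b)$ on $U_1$, then locates a definable subgroup $A$ with $\phi(L)\subset A\subset\phi(U_1)$ and takes $H=\phi^{-1}(A)\cap U_1$, whereas you extend $\psi=\phi^{-1}$ instead and pull a $D_\alpha$ back through $\tilde\psi$; the compactness steps and the use of Proposition~\ref{type-def-subgroups-of-Qp} are the same in both.
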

\begin{proof}
By compactness there exists a definable sets $L\subset U_1\subset U_0$
such that $U_1-U_1\subset U_0$,
and a definable extension of $\phi$ to $\phi:U_0\to Q_p$,
such that $\phi$ is injective and
such that if $a,b\in U_1$ then $\phi(a-b)=\phi(a)-\phi(b)$.
Now by compactness there is a definabe group 
$\phi(L)\subset A\subset \phi(U_1)$, with $A=0, D_{\alpha}$ or $Q_p$.
Then $H=\phi^{-1}A\cap U_1$ works.
\end{proof}
\section{Subgroups of $Q_p^{\times}$}\label{Qptimes}
Now we determine the type-definable subgroups of $Q_p^{\times}$.
I shall assume at times, for simplicity, that $p\neq 2$.

For $\alpha\in Z_{>0}$ denote $U_{\alpha}=\{b\in Q_p\mid v(b-1)\geq \alpha\}$.

\begin{plem}\label{lema-1-type-def-subgroup-of-Qptimes}
Let $L\subset Q_p^{\times}$ be a type-definable subgroup,
if $v(L)$ is nontrivial, and $L\subset S$ and $S$ definable,
then there exists $n\in\mathbb{Z}_{>0}$ 
such that $U_{n}\subset S$.
\end{plem}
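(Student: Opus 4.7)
The plan is to use compactness together with Cluckers' $p$-adic analytic cell decomposition to isolate a single cell whose self-quotient $D \cdot D^{-1}$ contains a standard-sized neighborhood $U_m$ of $1$; the nontriviality of $v(L)$ is exploited by looking at the negative powers of a fixed element $b \in L$, which force the relevant cell to have $\alpha = -\infty$.

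By compactness I find a symmetric definable $T$ with $L \subset T$ and $T \cdot T^{-1} \subset S$. Since $v(L)$ is nontrivial, pick $b \in L$ with $v(b) > 0$ (replacing $b$ by $b^{-1}$ if needed); then $\{b^k : k \in \mathbb{Z}\} \subset L \subset T$. Apply the analytic cell decomposition, writing $T = D_1 \cup \cdots \cup D_r$ with $D_i = \{x : \alpha_i < v(x - a_i) < \beta_i,\ x - a_i \in c_i Q_p^{n_i}\}$ and each $n_i \in \mathbb{Z}_{>0}$ standard. Since $v(b^k) = k v(b) \to -\infty$ as $k \to -\infty$, pigeonhole applied to $\{b^k : k < 0\}$ yields a cell $D$, with parameters $\alpha, \beta, a, c, n$, containing infinitely many such $b^k$; the fact that their valuations are unbounded below forces $\alpha = -\infty$.

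Next I analyze $D \cdot D^{-1} \subset T \cdot T^{-1} \subset S$ in two cases. If $a = 0$: $D = \{x : v(x) < \beta,\ x \in c Q_p^n\}$, and choosing $y_1, y_2 \in Q_p^{an}$ with sufficiently negative valuation shows that $D \cdot D^{-1}$ coincides with the subgroup $(Q_p^\times)^n$ of $n$-th powers, which by Hensel's lemma contains $U_{2 v_p(n) + 1}$. If $a \neq 0$: parametrize $D$ as $\{a + c z^n\}$ and, given $u \in U_m$, attempt to solve $z_1^n = u z_2^n + (u-1) a/c$ by Hensel's lemma started at $Y_0 = z_2$; choosing $z_2 \in Q_p^{an}$ with $v(z_2)$ arbitrarily negative in $Z$ (allowed since $\alpha = -\infty$), so that $v(z_2^n + a/c) = n v(z_2)$, the Hensel condition reduces to $m > 2 v_p(n) + (2 - n) v(z_2)$, satisfied for any $m \geq 1$ as soon as $v(z_2)$ is taken sufficiently negative (the easy case $n = 1$ is handled directly by setting $z_1 = u z_2 + (u-1)a/c$). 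In every subcase one concludes $U_m \subset S$ for the standard positive integer $m = 2 v_p(n) + 1$.

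The main obstacle is securing a standard bound $m$ when the cell parameters $v(a), v(c), \alpha, \beta$ may be non-standard. The crucial move is to select the cell by examining the negative powers of $b$, which forces $\alpha = -\infty$; this furnishes the freedom to place the Hensel reference point $z_2$ at arbitrarily negative valuation in $Z$, which absorbs all non-standard contributions and leaves a Hensel threshold depending only on the standard exponent $n$ of the cell.
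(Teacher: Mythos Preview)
Your overall strategy---pass to a symmetric $T$ with $T\cdot T^{-1}\subset S$, decompose $T$ into cells, isolate one cell via the powers of a fixed $b\in L$, and apply Hensel---is the same as the paper's. The gap is the claim that $\alpha=-\infty$. The valuation group $Z$ is a saturated model of Presburger arithmetic, so the countable external set $\{k\,v(b):k\in\mathbb{Z}_{<0}\}$ is \emph{not} unbounded below in $Z$; it is only unbounded below in the convex subgroup $O_Z(v(b))$. By saturation there are elements of $Z$ strictly below every $k\,v(b)$, and nothing prevents the cell parameter $\alpha$ from being such an element. Consequently, in your case $a\neq 0$ you cannot ``choose $z_2$ with $v(z_2)$ arbitrarily negative in $Z$'': the admissible $z_2$ (those with $a+cz_2^n\in D$) have valuation bounded below by $(\alpha-v(c))/n$, and you have no control over how this compares with $v(a/c)$. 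In particular, if $v(a)$ lies far below the convex hull of $v(L)$ then $v(b^k-a)=v(a)$ for every $k$, all admissible $z_2$ have the \emph{same} valuation, and your Hensel setup cannot be arranged.

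The paper avoids this by splitting not on $a=0$ versus $a\neq 0$, but on the position of $v(a)$ relative to the convex hull $C$ of $v(L)$: when $v(a)$ lies strictly below $C$ a direct argument gives $v^{-1}C\subset S_0$ (hence $Z_p^\times\subset S$); otherwise one can pick an element of the cell with valuation strictly less than $v(a)$, and then the Hensel step yields the standard bound $2v_p(n)+1$ without any hypothesis on $\alpha$. Your argument can be repaired along these lines, but as written the $a\neq 0$ case does not go through. (A minor over-claim in your $a=0$ case: $D\cdot D^{-1}=(Q_p^\times)^n$ also needs $\alpha=-\infty$, though the weaker inclusion $U_{2v_p(n)+1}\subset D\cdot D^{-1}$, which is all you need, holds regardless since $ux\in D$ whenever $x\in D$ and $u\in U_{2v_p(n)+1}$.)
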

\begin{proof}
Let $C$ be the convex hull of $v(L)$.
Let $L\subset S$ be a definable set.
Let $L\subset T$ be a symmetric definable set such that $T^2\subset S$.
Let $T=S_0\cup\cdots\cup S_n$ be de cell decomposition of $T$.
Then there exists one of them, say $S_0$, such that $v(S_0)\cap C$ is 
unbounded below in $C$.
If $S_0=\{b\in Q_p\mid \alpha \leq v(b-a)<\beta, b-a\in cQ_p^n\}$
we distinguish two cases, so assume first that $v(a)<\tau$ for all $\tau\in C$.

If $b\in S_0\cap v^{-1}C$ then $v(b)\in C$, so $v(a)<v(b)$ and $v(b-a)=v(a)$.
Now if  $b'\in v^{-1}C$ , then
$v((b'-a)-(b-a))=v(b'-b)\geq \text{min}(v(b'), v(b))>
v(a)+2v_p(n)=v(b-a)+2v_p(n)$,
so by Hensel's lemma we get $b'-a\in cQ_p^{n}$, we see then that $b'\in S_0$
and $v^{-1}C\subset S_0$, and this case is done.

Assume now that $\tau \leq v(a)$ for one $\tau \in C$. Then by compactness
we may obtain $\tau\in C$ such that $\tau<v(a)+m$ for all $m\in\mathbb{Z}$.
Now we choose $b\in S_0\cap v^{-1}(C)$ such that $v(b)<\tau$.
Now if $d\in U_{2v_p(n)+1}$, then
$v((bd-a)-(b-a))=v(b(d-1))=v(b)+v(d-1)>v(b)+2v_p(n)=v(b-a)+2v_p(n)$
so $bd\in S_0$. Then $d\in T^2\subset S$, as required.
\end{proof}
For the next Lemma I remark that in the standard model $\mathbb{Q}_p$
and for $p\neq 2$ we have an isomorphism 
$(\mathbb{Z}_p,+)\cong U_1(\mathbb{Q}_p)$ given 
by the exponential $z\mapsto (1+p)^{z}$. This map is not likely to be
definable in $Q_p$
however it is locally analytic 
, so it is definable in $Q_p^{an}$,
in this language there is also cell decomposition so 
the Proposition \ref{type-def-subgroups-of-Qp} remains true
and produces
\begin{plem}\label{lema-2-type-def-subgroup-of-Qptimes}
Let $p\neq 2$.
If $L\subset U_1$ is a type-definable subgroup of $Q_p^{\times}$ then
$L$ is trivial or a bounded intersection of groups of the form $U_{\alpha}$.
\end{plem}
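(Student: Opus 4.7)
The plan is to transfer the problem to additive subgroups of $Q_p$ through the exponential $\exp(z) = (1+p)^z$, which as explained immediately before the statement is a $Q_p^{an}$-definable group isomorphism $(D_0,+) \to (U_1,\cdot)$. First I would check that this map restricts to a bijection $D_{\alpha} \to U_{\alpha+1}$ for every $\alpha \in Z_{\geq 0}$: in the standard model this is standard $p$-adic analysis (for $p\neq 2$ the unique closed subgroup of $U_1$ of index $p^{\alpha}$ is $U_{\alpha+1}$), and since the statement is first-order in $Q_p^{an}$ it transfers to the monster.

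Next, since $L \subset U_1$ is type-definable in the valued field language, it is a fortiori type-definable in the richer analytic language, and pulling back under $\exp$ yields a type-definable subgroup $L' = \exp^{-1}(L) \subset D_0$ of $(Q_p,+)$ in $Q_p^{an}$. I then invoke the analog of Proposition \ref{type-def-subgroups-of-Qp} in the analytic language, which the paragraph before the statement already flags as valid: the three preceding lemmas use only cell decomposition and Hensel's lemma, both available in $Q_p^{an}$. This gives that $L'$ is either $0$ or a bounded intersection $\bigcap_i D_{\beta_i}$ of definable subgroups of $Q_p$, and since $L' \subset D_0$ we may take $\beta_i \geq 0$.

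Finally, because $\exp$ is a bijection it commutes with intersections, so pushing forward yields $L = \exp(L') = \bigcap_i \exp(D_{\beta_i}) = \bigcap_i U_{\beta_i+1}$ (or $L$ is trivial), which is exactly the required description.

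The main obstacle, really more bookkeeping than a genuine difficulty, is justifying the transfer of Proposition \ref{type-def-subgroups-of-Qp} to the analytic language. The analytic cells are a strict subclass of the algebraic cells (no singletons, no half-infinite pieces), so the three preceding lemmas only simplify in $Q_p^{an}$; one just has to confirm that each invocation of cell decomposition and Hensel's lemma remains legitimate there, which it does.
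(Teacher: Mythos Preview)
Your proposal is correct and follows exactly the paper's approach: pass to $Q_p^{an}$ via the exponential $z\mapsto(1+p)^z$, invoke the analytic cell decomposition to carry Proposition~\ref{type-def-subgroups-of-Qp} over to that language, and translate the resulting description of additive type-definable subgroups back to the multiplicative side. You have simply made explicit the bookkeeping (the correspondence $D_\alpha\leftrightarrow U_{\alpha+1}$ and the verification that the preceding lemmas transfer) that the paper leaves implicit in the remark preceding the lemma.
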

Given $a\in Q_p^{\times}$ such that $v(a)>n$ for all $n\in\mathbb{N}$,
we define a group $H=H_a$ with underlying set
$\{b\in Q_p^{\times}\mid 0\leq v(b)<v(a)\}$
and product 
$b_1\cdot_Hb_2=b_1b_2$ if $v(b_1)+v(b_2)<v(a)$ and
$b_1\cdot_H b_2=b_1b_2a^{-1}$ if $v(b_1)+v(b_2)\geq v(a)$.

We note that
$H\cong O(a)/\langle a \rangle$ where $O(a)=\{b\in Q_p^{\times}\mid \text{ there exists }n\in\mathbb{N}\text{ such that }|v(b)|\leq nv(a)\}$. 
$O(a)$ is considered as a subgroup of the multiplicative group $Q_p^{\times}$
and $\langle a \rangle$ is the group generated by $a$. The isomorphism is given
by the morphism $O(a)\to H$ that takes $b\in O$ to $ba^{-n}$ where $n$
is the unique element of $\mathbb{Z}$ such that $nv(a)\leq v(b)<(n+1)v(a)$.

\begin{prop}\label{type-definable-subgroups-of-Qptimes}
Let $p\neq 2$.
If $L\subset Q_p^{\times}$ is a type-definable subgroup then 
either 
$v(L)$ is nontrivial 
in which case for every natural number $n$,
the definable group $G_n=(Q_p^{\times})^nL$ and the convex hull $C$
of $v(L)$ satisfy
$L=\cap_{n}G_n\cap v^{-1}C$;
or there exists a root or unity $\eta\in Z_p^{\times}$ such that
such that $L$ is in direct product $L=\langle \eta\rangle L'$,
 with $L'=L\cap U_1$ trivial  or 
a bounded intersection of groups of the form
$U_{\alpha}$.
\end{prop}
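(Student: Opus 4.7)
I would split the argument into two cases according to whether $v(L)=\{0\}$ or not.

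Suppose first $v(L)=\{0\}$, so $L\subset Z_p^{\times}$. For $p\neq 2$ the Teichm\"uller lift gives a definable decomposition $Z_p^{\times}=\mu_{p-1}\times U_1$. The projection $L\to\mu_{p-1}$, being into a cyclic group, has image $\langle\eta\rangle$ for some root of unity $\eta$, and kernel $L':=L\cap U_1$, a type-definable subgroup of $U_1$. Lemma \ref{lema-2-type-def-subgroup-of-Qptimes} identifies $L'$ as trivial or a bounded intersection of subgroups $U_\alpha$. The crux of this case is to show that $\eta$ itself belongs to $L$, so that $L=\langle\eta\rangle L'$ is the claimed direct product. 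Pick $l\in L$ with $\mu_{p-1}$-part $\eta$, writing $l=\eta u$ with $u\in U_1$; then $l^{p-1}=u^{p-1}\in L'$. Since $p-1\in\mathbb{Z}_p^{\times}$ when $p\neq 2$, the binomial expansion gives $v(w^{p-1}-1)=v(w-1)$ for $w\in U_1$, so each $U_\alpha$ is closed under the unique $(p-1)$-st root operation on $U_1$. Hence $u\in L'$ and $\eta=lu^{-1}\in L$.

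Suppose now $v(L)\neq\{0\}$, and set $H:=\bigcap_n G_n\cap v^{-1}(C)$. The inclusion $L\subset H$ is immediate. To prove the reverse, I first show $v(H)=v(L)$ and then reduce to a statement inside $Z_p^{\times}$. The image $v(L)$ is a type-definable subgroup of $Z$, so by Proposition \ref{type-definable-subgroups-of-Z} it has the form $C\cap mZ$ for some supernatural $m$. Since $v(G_n)=nZ+v(L)$ and $v(L)\subset kZ$ whenever a standard $k$ divides $m$, specializing $n=k$ shows any $a\in C\cap\bigcap_n(nZ+v(L))$ lies in $kZ$ for every such $k$, hence in $mZ$, hence in $v(L)$. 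For $x\in H$, choose $l_0\in L$ with $v(l_0)=v(x)$ and set $u:=xl_0^{-1}$; then $u\in Z_p^{\times}\cap H\subset\bigcap_n G_n\cap Z_p^{\times}$, and it is enough to show $u\in L_0:=L\cap Z_p^{\times}$. By the previous case, $L_0=\langle\eta\rangle(L\cap U_1)$.

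The main obstacle is this final containment $\bigcap_n G_n\cap Z_p^{\times}\subset L_0$. My approach is a compactness argument built on Lemma \ref{lema-1-type-def-subgroup-of-Qptimes}: for any definable $S\supset L$, taking a symmetric definable $T\subset S$ with $T^2\subset S$ and applying the Lemma to $T$ yields $U_n\subset T$, and consequently $L\cdot U_n\subset S$. Combined with the identity $(Z_p^{\times})^{p^{k-1}(p-1)}=U_k$, this links the coarse approximations $L(Q_p^{\times})^n$ in $Q_p^{\times}$ to the local approximations $L_0 U_k$ in $Z_p^{\times}$. Projecting onto the two factors of $Z_p^{\times}/L_0\cong\mu_{p-1}/\langle\eta\rangle\times U_1/(L\cap U_1)$ and using $\bigcap_n\mu_d^n=\{1\}$ for $d=(p-1)/\mathrm{ord}(\eta)$ on the finite factor, together with the type-definable description of $L\cap U_1$ on the $U_1$ factor, one concludes $u\in L_0$ by checking every defining formula separately.
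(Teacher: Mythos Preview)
Your treatment of the case $v(L)=\{0\}$ is correct and essentially the same as the paper's: you argue explicitly that the map $u\mapsto u^{p-1}$ is a bijection of each $U_\alpha$, while the paper phrases this as the connecting homomorphism $\pi(L)\to U_1/L'$ being trivial because $U_1/L'$ has only $p$-power torsion. These are the same observation. (Minor slip: you want $L\subset T$ with $T^2\subset S$, not ``$T\subset S$''.)

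In the case $v(L)\neq\{0\}$, your reduction is sound: the computation $v(H)=v(L)$ via Proposition~\ref{type-definable-subgroups-of-Z} is correct, and reducing to the claim $\bigcap_m G_m\cap Z_p^{\times}\subset L_0$ is legitimate. But the final paragraph does not prove this claim. The difficulty is that when you write $u=ly^m$ with $l\in L$ and $y\in Q_p^{\times}$, neither $l$ nor $y$ need lie in $Z_p^{\times}$, so you cannot project them to $\mu_{p-1}$ or $U_1$. Concretely, $G_m\cap Z_p^{\times}$ is not just $L_0(Z_p^{\times})^m$: there is an extra contribution coming from the cosets $l_\gamma y_\gamma^m\cdot L_0(Z_p^{\times})^m$ as $\gamma$ ranges over $mZ\cap v(L)$, where $v(l_\gamma)=\gamma$ and $v(y_\gamma)=-\gamma/m$. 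This gives a homomorphism $mZ\cap v(L)\to Z_p^{\times}/(L_0(Z_p^{\times})^m)$ whose image you must show vanishes in the limit, and your identity $(Z_p^{\times})^{p^{k-1}(p-1)}=U_k$ and the projection onto $\mu_{p-1}/\langle\eta\rangle\times U_1/(L\cap U_1)$ give you no handle on these cross terms.

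The paper resolves exactly this obstruction by passing to an elementary extension carrying a definable section $x\mapsto p^x$ of $v$, hence an angular component map $\mathrm{ac}:Q_p^{\times}\to Z_p^{\times}$. With $\mathrm{ac}$ in hand one can split every element as $\mathrm{ac}(x)\cdot p^{v(x)}$, turn the connecting homomorphism $v(L)\to Z_p^{\times}/((L\cap Z_p^{\times})U_n)$ into an honest (ac-definable) map, kill it on a finite-index subgroup $C\cap mrZ$ by finiteness of the target, and then read off by compactness that some $(Q_p^{\times})^k\cap v^{-1}(C)$ lies inside the given definable neighbourhood $S$ of $L$. You should either supply a genuine argument controlling the cross terms in the base language, or adopt the angular-component trick.
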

\begin{proof}
Embed $Q_p\to Q_p^{*}$, where $Q_p^{*}$ is a monster 
model of $\mathbb{Q}_p$ as a valued field together with the exponentiation map $\mathbb{Z}\to \mathbb{Q}_p$,
$x\to p^x$; the map $Q_p\to Q_p^{*}$ is taken to be elementary from
$Q_p$ into the valued field reduct of $Q_p^*$.
Replacing $Q_p$ by $Q_p^*$ we may assume that there exist a map
$Z\to Q_p^{\times}$ elementary equivalent to $x\to p^x$ in $\mathbb{Q}_p$.
We define then ${\rm ac} x=xp^{-v(x)}$ the projection
$Q_p^{\times}\to Z_p^{\times}$ associated to the short exact sequence
$1\to Z_p^{\times}\to Q_p^{\times}\to Z\to 0$ and the section 
$Z\to Q_p^{\times}, x\mapsto p^x$. In the rest of the proof
we shall have definable mean definable in the valued field language,
and say ${\rm ac}$-definable to mean definable in the language including
${\rm ac}$.

Assume first that $v(L)$ is nontrivial and take $C$ and $G_n$ as in the
statement. Let $L\subset S$ be a definable set.
Let $L\subset T$, with $T$ a symmetric definable set such that 
$T^{3}\subset S$.
By the Lemma \ref{lema-1-type-def-subgroup-of-Qptimes}
we get that there exists $n$ such that $U_n \subset T$. Define
$A=(L\cap Z_p^{\times})U_n$, this is a definable group because it contains the 
group $U_n$ which is of finite index in $Z_p^{\times}$. Now take the
 $v(L)\to Z_p^{\times}/A$ the composition of the connecting
homomorphism $v(L)\to Z_p^{\times}/L\cap Z_p^{\times}$ with the canonical
projection. 
This is an ac-type-definable morphism. By compactness there exists
$r\in\mathbb{Z}$ such that $v(L)\subset C\cap rZ$ and an ac-type-definable
group morphism 
extension $C\cap rZ\to Z_p^{\times}/A$,
see Proposition \ref{type-definable-subgroups-of-Z}.
 As the codomain is finite we
see $m(C\cap rZ)=mrC=C\cap mrZ$ is a subgroup of the kernel
of $C\cap rZ\to Z_p^{\times}/A$, (here $m=\text{Card}(Z_p/A)$).
So if we denote $L_1=v^{-1}(mrZ)\cap LU_n$ 
we get that the connecting homomorphism
in the short exact sequence $1\to A\to L_1\to v(L)\cap mrZ\to 0$ is trivial,
so the sequence is split with splitting given by the restriction of the
section $Z\to Q_p^{\times}$ to $v(L)\cap mrZ$.
This is to say $x\in L_1$ iff $v(x)\in v(L)\cap mrZ$ and $xp^{-v(x)}\in A$.
By compactness there is $s\in\mathbb{Z}$ such that if 
$v(x)\in C\cap sZ$ and $xp^{-v(x)}\in A$ then $x\in T^2$.
But the set of such $x$ form the kernel of a group morphism
$v^{-1}(C)\to Z/sZ\times Z_p/A$ with finite codomain, so it contains
$v^{-1}(C)^k=(Q_p^{\times})^k\cap v^{-1}(C)$, so $S$ contains $G_k$ as required.

Assume now that $v(L)$ s trivial. Then $L\subset Z_p^{\times}$.
If $L'=L\cap U_1$ then we get a short exact sequence
$1\to L'\to L\to \pi(L)\to 1$. 
Where $\pi:Z_p^{\times}\to \mathbb{F}_p^{\times}\cong \mathbb{Z}/(p-1)\mathbb{Z}$.
Recall that above a primitive root in $\mathbb{F}_p^{\times}$
lies a primitive $p-1$th root of $Q_p$ (by Hensel's lemma),
 so the sequence 
$1\to U_1\to Z_p^{\times}\to \mathbb{F}_p^{\times}\to 1$
is split.
Then we get the connecting homomorphism 
$\pi(L)\to U_1/L'$ and from the description of $L'$ in 
Lemma \ref{lema-2-type-def-subgroup-of-Qptimes}
we see that $U_1/L'$ only has $p^n$ torsion, 
so as it has no $p-1$-torsion 
we conclude that $\pi(L)\to U_1/L'$ is trivial, so $L$ is as required.
\end{proof}
Now in the case $p=2$, we have that $U_2\cong p^2Z_p$ in $Q_p^{an}$
with an isomorphism that sends $U_{\alpha}$ to $D_{\alpha}$, and the group
$U_1/U_2$ is cyclic of order 2. In this case we get in fact a split exact
sequence $1\to U_2\to U_1\to U_1/U_2\to 1$. It is possible to say with greater
precision the effect the group $U_1/U_2$ has on a type-definable subgroup
of $Q_p$ but we shall be satisfied with ignoring the contribution of it,
in the following version of the previous Proposition which works in $p=2$.
\begin{prop}\label{type-definable-subgroups-of-Qptimes-v2}
$L\subset Q_p^{\times}$ is a type-definable subgroup then 
either 
$v(L)$ is nontrivial 
in which case for every natural number $n$,
the definable group $G_n=(Q_p^{\times})^nL$ and the convex hull $C$
of $v(L)$ satisfy
$L=\cap_{n}G_n\cap v^{-1}C$;
or $L\subset Z_p^{\times}$ and $L\cap U_2$ is a finite index subgroup of $L$
which is a bounded intersection of groups of the form $U_{\alpha}$.
\end{prop}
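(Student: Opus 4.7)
The plan is to recycle the proof of Proposition \ref{type-definable-subgroups-of-Qptimes} as much as possible; only the part handling the case $v(L)=0$ needs a new argument at $p=2$.

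First, the nontrivial-valuation case. If $v(L)$ is nontrivial, I would repeat verbatim the argument from the proof of Proposition \ref{type-definable-subgroups-of-Qptimes}: embed $Q_p$ in a model $Q_p^*$ carrying a definable section $x \mapsto p^x$, pick a symmetric definable $T$ with $L \subset T$ and $T^3 \subset S$ for a given definable superset $S$ of $L$, invoke Lemma \ref{lema-1-type-def-subgroup-of-Qptimes} to find $n$ with $U_n \subset T$, form $A = (L \cap Z_p^{\times}) U_n$, and use the connecting homomorphism $v(L) \to Z_p^{\times}/A$ together with compactness to produce integers $r,m,s,k$ witnessing $G_k \subset S$. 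Nothing in this argument uses $p \neq 2$: the Hensel estimates in Lemma \ref{lema-1-type-def-subgroup-of-Qptimes} already carry the factor $2v_p(n)+1$ that absorbs wild ramification, and the splitting $Z \to Q_p^{\times}$ is available at every prime.

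It remains to treat $v(L)=0$, so $L \subset Z_p^{\times}$. At $p=2$ one has $Z_2^{\times} = U_1$ and $U_1/U_2$ is cyclic of order $2$, so the embedding $L/(L\cap U_2) \hookrightarrow U_1/U_2$ shows that $L' := L \cap U_2$ is a type-definable subgroup of $L$ of index at most $2$. Hence only the claim that $L'$ is a bounded intersection of groups of the form $U_{\alpha}$ remains.

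For this I would use the analytic isomorphism $U_2 \cong p^2 Z_p$ in $Q_p^{an}$ recalled just before the statement, which carries $U_{\alpha}$ to $D_{\alpha}$ for $\alpha \geq 2$. Under this isomorphism $L'$ becomes a type-definable subgroup of the additive group $p^2 Z_p \subset Q_p$ in the analytic language, and Proposition \ref{type-def-subgroups-of-Qp} applies: its proof transfers to $Q_p^{an}$ because Cluckers' analytic cell decomposition provides exactly the dimension-one cell decomposition used there, and the three preceding lemmas on $S-S$ go through with no change. Transporting the resulting description back along the logarithm realises $L'$ as a bounded intersection of $U_\alpha$'s, completing the proof. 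The only genuinely new point to verify is that the proofs in Section \ref{Qp} survive the passage to $Q_p^{an}$, which is routine; everything else is bookkeeping around the index-two quotient $U_1/U_2$.
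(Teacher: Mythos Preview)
Your proposal is correct and matches the paper's own approach: the paper's proof of this proposition is literally the sentence ``The last proof goes through,'' with the preceding paragraph supplying exactly the ingredients you use --- the analytic isomorphism $U_2\cong p^2Z_p$ in $Q_p^{an}$ carrying $U_\alpha$ to $D_\alpha$, and the finite quotient $U_1/U_2$ of order two. Your write-up simply makes explicit what the paper leaves as a one-line remark, including the observation that Proposition~\ref{type-def-subgroups-of-Qp} transfers to $Q_p^{an}$ via analytic cell decomposition (which is precisely how Lemma~\ref{lema-2-type-def-subgroup-of-Qptimes} was justified for odd $p$).
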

The last proof goes through.
\begin{prop}\label{case-Qptimes}
If $G$ is an interpretable group, and 
$L\subset G$ is a type-definable bounded index group, and
$\phi:L\to Q_p^{\times}$ is an injective type-definable group
homomorphism, then $G$ has a finite index interpretable subgroup
$H\subset G$ which is definably isomorphic to $0$ or $(Q_p^{\times})^{n}$ or
$U_{\alpha}$ or $O(a)^n/\langle a^n\rangle$. 
\end{prop}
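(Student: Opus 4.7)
The plan is to mirror the proof of Proposition~\ref{generic-to-subgroup-of-Qp}. First I would use compactness to extend $\phi$ to an injective definable map $\phi\colon U\to Q_p^{\times}$ on a definable set $U\supset L$, with $\phi(xy)=\phi(x)\phi(y)$ for $x,y$ in a smaller definable $V\supset L$ satisfying $V\cdot V\subset U$. Then I would look for a definable subgroup $K\subset Q_p^{\times}$ with $\phi(L)\subset K\subset\phi(V)$ and set $H=\phi^{-1}(K)$, shrinking $V$ as needed to ensure $H$ is truly a subgroup of $G$. Since $H$ contains $L$ (possibly after passing to a finite-index subgroup) and $L$ has bounded index in $G$, definability of $H$ will force finite index in $G$, and $\phi|_H\colon H\to K$ will be a definable isomorphism.

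To locate $K$ I would apply the classification of Proposition~\ref{type-definable-subgroups-of-Qptimes-v2} to $\phi(L)$. If $\phi(L)$ is trivial then $L$ is trivial, $G$ is interpretable and bounded hence finite, and $H=\{1\}\cong 0$. If $v(\phi(L))=0$ and $\phi(L)$ is infinite, then after passing to $L\cap\phi^{-1}(U_2)$ the image becomes a bounded intersection $\bigcap_{\alpha}U_{\alpha}$, and compactness yields some $U_{\alpha_0}\subset\phi(V)$; take $K=U_{\alpha_0}$. If $v(\phi(L))\neq 0$ with convex hull $C=Z$, then $\phi(L)=\bigcap_n G_n$ with $G_n=(Q_p^{\times})^n\phi(L)$ definable finite-index subgroups, compactness gives some $G_{n_0}\subset\phi(V)$, and a further intersection with $(Q_p^{\times})^{n_0}\subset G_{n_0}$ yields $H\cong(Q_p^{\times})^{n_0}$.

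The hard case is $v(\phi(L))\neq 0$ with proper convex hull $C\subset o_Z(a)$. Here no definable subgroup of $Q_p^{\times}$ can sit inside $\phi(V)$, since every such subgroup has valuation of the form $nZ$, which cannot be contained in any proper convex subgroup of $Z$. I would get around this by changing ambient group: from $v(\phi(L))\subset o_Z(a)$ and $v(a)\notin o_Z(a)$ one deduces $\phi(L)\cap\langle a\rangle=\{1\}$, so the composition
\[
\psi\colon L\xrightarrow{\phi}O(a)\twoheadrightarrow O(a)/\langle a\rangle=H_a
\]
is an injective type-definable group morphism into the \emph{definable} group $H_a$. Replaying the argument with $H_a$ in place of $Q_p^{\times}$, the finite-index definable subgroups $(H_a)^n\cong O(a)^n/\langle a^n\rangle$ play the role of the $G_n$'s, and compactness selects one containing $\psi(L)$ and sitting inside the image of an extended $\psi$, giving $H\cong O(a)^{n_0}/\langle a^{n_0}\rangle$.

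The main obstacle will be executing this last reduction. I would need to verify that $\psi$ is type-definable using the piecewise formula $b\mapsto ba^{-n(b)}$, which on $v^{-1}(C)$ only requires $n(b)\in\{0,-1\}$ according to the sign of $v(b)$; and the type-definable subgroup classification must be replayed inside $H_a$, where the induced ``valuation'' lives in the torsion quotient $O_Z(a)/v(a)\mathbb{Z}$ and thereby removes the obstruction that blocked direct treatment in $Q_p^{\times}$. If the analysis required auxiliary structure such as the analytic language from Lemma~\ref{lema-2-type-def-subgroup-of-Qptimes}, a final application of Proposition~\ref{localgroupglobal} descends $H$ to a group definable in the base language of $G$.
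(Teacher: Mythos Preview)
Your treatment of the trivial, $U_\alpha$, and $(Q_p^{\times})^n$ cases is fine and matches the paper's logic. The gap is in the ``hard case'', and it is not fixable along the lines you suggest.

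Passing to $H_a$ does not remove the obstruction. You want compactness to produce some $(H_a)^{n_0}\subset\psi(V')$, which would require $\bigcap_n (H_a)^n\subset\psi(V')$. But each $(H_a)^n$ has finite index in $H_a$, hence surjects onto $H_a/o(a)\cong\mathbb{R}/\mathbb{Z}$; by compactness of this quotient the intersection $\bigcap_n (H_a)^n$ still surjects onto $\mathbb{R}/\mathbb{Z}$. On the other hand $\psi(V')$ is just a definable neighbourhood of $\psi(L)\subset o(a)$, and its image in $\mathbb{R}/\mathbb{Z}$ need only be a small closed arc about $0$: nothing forces the injective extension of $\psi$ to reach all the way around the circle. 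So no $(H_a)^{n_0}$ fits inside $\psi(V')$, and more generally there is no definable subgroup of $H_a$ squeezed between $\psi(L)$ and $\psi(V')$: the definable subgroups of $H_a$ are either contained in $Z_p^{\times}$ (too small to contain $\psi(L)$, whose valuation is nontrivial) or of finite index (too large to sit in $\psi(V')$). The classification you propose to ``replay'' inside $H_a$ simply reproduces the same convex-hull obstruction you started with, now in $o_Z(v(a))\subset O_Z(v(a))/v(a)\mathbb{Z}$.

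The paper's argument reverses the direction: it extends $\phi^{-1}\colon\phi(L)\to G$ rather than $\phi$. First compactness (with Proposition~\ref{type-definable-subgroups-of-Qptimes-v2} and Lemma~\ref{type-definable-convex-subgroups}) lets one assume $\phi(L)=o(a)^n$ and extends $\phi^{-1}$ to an injective definable map on a definable strip $I_{m'}\cap O(a)^n$, multiplicative on pairs. The new idea is then to use $O(a)^n/o(a)^n\cong\mathbb{R}$ (Proposition~\ref{Z-standard}): since a symmetric interval generates $\mathbb{R}$, the strip generates $O(a)^n$, and the local homomorphism extends to a genuine group homomorphism $\psi_2\colon O(a)^n\to G$. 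Its image is a definable finite-index subgroup $H$, and its kernel $K$ meets $o(a)^n$ trivially, so $K$ maps to a closed discrete subgroup of $\mathbb{R}$; nontriviality (else $\psi_2$ would inject an $\lor$-definable set into a definable one) forces $K=\langle b\rangle$, giving $H\cong O(a)^n/\langle b\rangle$. The point is that one must build a map \emph{into} $G$ from an $\lor$-definable cover and then read off the lattice kernel, not hunt for a definable subgroup in the target.
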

\begin{proof}
Consider the type-definable group $\phi(L)\subset Q_p^{\times}$,
it is of the form described in the statement of 
Proposition \ref{type-definable-subgroups-of-Qptimes-v2}.

By compactness $\psi=\phi^{-1}:\phi(L)\to L$ 
extends to a type-definable injective group homomorphism
$\psi:L'\to G$ and after a restriction to a finite index subgroup
 $L'$ is either 
$ U_{\alpha}$ or $1$ or $(Q_p^{\times})^n$ or
$o(a)^n$ (here we use Lemma \ref{type-definable-convex-subgroups}),
so without loss of generality $\phi(L)=L'$. In the first three cases
$L'$ is definable of the kind required,
the image is a definable bounded index group,
 so it is finite index by compactness so we are done. 
Assume now the remaining case.

If we denote $r_m\in\mathbb{Z}$ the element such that $0\leq r_m<m$
and $v(a)-r_m\in mZ$, and $I_m=v^{-1}([\frac{1}{m}(a-r_m),\frac{1}{m}(a-r_m)])$
then $\cap_mI_m\cap (Q_p^{\times})^n=o(a)^n$, so $\psi$ extends
to an injective map $\psi_1:I_m\cap O(a)^n\to G$. 
We also may find a $m'>m$ such that $\psi_1(ab^{-1})=\psi_1(a)\psi_1(b)^{-1}$
for all $a,b\in I_{m'}\cap O(a)^n$. Remembering that
$O(a)/o(a)\cong (\mathbb{R},+)$, from which we obtain
$O(a)^n/o(a)^n\cong \mathbb{R}$, we see that the map
$\psi_1$ restricted to $I_{m'}\cap O(a)^n$ extends to a definable group
homomorphism $\psi_2:O(a)^n\to G$. The set $A=\psi_2(I_{m'}\cap O(a)^n)$
is definable and a bounded number of translates cover $G$, so a finite
number of translates cover $G$, so the group generated by it can be generated
in a finite number of steps and is definable,
indeed the number of translates of $A$
in $A^r$ stabilizes, for such an $r$ $A^r$ is the group generated by $A$.
We see then that $H=\psi_2(O(a)^n)$ is a definable subgroup of $G$ of finite
index. The kernel of $\psi_2$ is a definable subgroup $K$ of $O(a)^n$
such that $K\cap o(a)^n=1$.
The image of $K$ to $\mathbb{R}\cong O(a)^n/o(a)^n$ is a subgroup 
$f(K)$ such that $f(K)\cap [-1,1]$ is compact and $f(K)\cap[-\epsilon,\epsilon]=0$.
 The subgroups of $\mathbb{R}$
are either dense or of the form $f(K)=f(b)\mathbb{Z}$, or trivial.
$K$ can not be trivial as in this case $\psi_2$ is injective to a definable
set, which can not happen by compactness.
So  $K=\langle b\rangle$ and we are done.
\end{proof}
\section{Subgroups of the one dimensional twisted torus}\label{torus}
Given $d\in \mathbb{Q}_p\setminus \mathbb{Q}_p^2$, such that $v(d)\geq 0$,
 $G=G(d)=\{x+y\sqrt{d}\mid x^2-dy^2=1\}
\subset Q_p(\sqrt{d})^{\times}$ is what we call
a one dimensional twisted torus.
In this section we give the type-definable subgroups of $G(d)$.

This group $G(d)$ is affine, so it is a subgroup of a 
general linear group, explicitely this can be seen as follows;
an element $a\in G(d)$ produces a multiplication map
$L_a:Q_p(\sqrt{d})\to Q_p(\sqrt{d})$, and as $Q_p(\sqrt{d})$
is 2 dimensional $Q_p$-vector space this produces an injective
group morphism $G(d)\to {\rm GL}_2(Q_p)$. If one takes as 
a basis of $Q_p(\sqrt{d})$ $\{1,\sqrt{d}\}$ this map is given
by $(x,y)\mapsto\begin{bmatrix} x & dy\\ y & x \end{bmatrix}$,
the norm is precisely the determinant of this matrix.

\begin{prop}
If $L$ is a bounded index type-definable group of $(G(d),\cdot)$
for $d\in \mathbb{Q}_p^{\times}\setminus(\mathbb{Q}_p^{\times})^2$
with $v(d)\geq 0$, then $L\cap F_{2}$ is a bounded intersection of 
groups of the form $F_{\alpha}$. Where 
$F_{\alpha}=\{\begin{bmatrix} x & dy \\ y & x\end{bmatrix}\mid
v(1-x)\geq \alpha, v(y)\geq \alpha, x^2-dy^2=1\}$ and 
$F_2$ is finite index in $G(d)$.
\end{prop}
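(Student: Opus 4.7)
The plan is to transport the problem from the multiplicative group $F_2$ into the additive group $(Q_p,+)$ via the analytic logarithm, and then appeal to Proposition \ref{type-def-subgroups-of-Qp}, in the spirit of Lemma \ref{lema-2-type-def-subgroup-of-Qptimes}.

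First I would establish that $F_2$ has finite index in $G(d)$. Since $d$ is a non-square unit, $G(d)$ is anisotropic over $\mathbb{Q}_p$, so $G(d)(\mathbb{Q}_p)$ is a compact subgroup of $\mathbb{Q}_p(\sqrt d)^\times$ and $F_2$ is open in it; the finiteness of the index then transfers from the standard model to $Q_p$.

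Next I would construct a $Q_p^{an}$-definable group isomorphism $\ell : F_2 \to D_\beta$ for some $\beta\in Z$. On elements $x+y\sqrt d$ with $v(x-1),v(y)\geq 2$, the series $\log(x+y\sqrt d)=\sum_{n\geq 1}(-1)^{n+1}\bigl((x-1)+y\sqrt d\bigr)^n/n$ converges in $Q_p(\sqrt d)$ for every $p$ (since $v(p^2)>v(p)/(p-1)$). Because $(x+y\sqrt d)(x-y\sqrt d)=1$, adding the two logarithms shows $\log(x+y\sqrt d)+\log(x-y\sqrt d)=0$, so the image lies in the trace-zero part $\sqrt d\cdot Q_p$, and I define $\ell(x,y)$ by $\log(x+y\sqrt d)=\ell(x,y)\sqrt d$. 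Expanding the series in $\sqrt d$ and using $(\sqrt d)^2=d\in\mathbb{Z}_p$ separates it into two convergent $\mathbb{Q}_p$-valued power series in $(x-1,y)$ on $\mathbb{Z}_p^2$, so $\ell$ is definable in $Q_p^{an}$. It is an injective group morphism (the inverse being $\exp$, which is similarly definable in $Q_p^{an}$ on a small enough neighbourhood of $0$), and its image is a definable additive subgroup of $Q_p$, necessarily of the form $D_\beta$.

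The group $L\cap F_2$ is type-definable in $Q_p$, hence also in $Q_p^{an}$, so $\ell(L\cap F_2)$ is a type-definable subgroup of $(Q_p,+)$. Proposition \ref{type-def-subgroups-of-Qp} carries over verbatim to $Q_p^{an}$ because its proof uses only cell decomposition, which is available analytically by \cite{analytic-cell-decomposition}. Hence $\ell(L\cap F_2)=\bigcap_i D_{\gamma_i}$ for a bounded family, and transporting back via $\exp$ gives $L\cap F_2=\bigcap_i F_{\gamma_i'}$ for a bounded family, as desired. The main technical point—and the one I expect to be the main obstacle—is checking that the $\sqrt d$-expansion of $\log(x+y\sqrt d)/\sqrt d$ really does produce convergent power series with $\mathbb{Q}_p$-coefficients on $\mathbb{Z}_p^2$, so that $\ell$ qualifies as a definable function in the analytic language rather than merely an expression written in $Q_p(\sqrt d)$.
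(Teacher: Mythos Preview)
Your approach is essentially the same as the paper's: transport $F_2$ to an additive group $D_\beta\subset Q_p$ by an analytic isomorphism, then invoke Proposition~\ref{type-def-subgroups-of-Qp} in the analytic language. The only real difference is how the isomorphism is produced. The paper quotes Schneider's theory of $p$-valuations on compact $p$-adic Lie groups (\cite{p-adic-lie}, Example~23.2, Proposition~26.6, Theorem~27.1, Theorem~29.8) to obtain a locally analytic isomorphism $F_2\cong p^2\mathbb{Z}_p$ abstractly, and then simply asserts that it carries $D_\alpha$ to $F_\alpha$. You instead write down the logarithm in $\mathbb{Q}_p(\sqrt d)$ by hand and argue that its $\sqrt d$-component is a genuine $Q_p^{an}$ function. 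Your route is more self-contained and avoids importing the Lie-group machinery; the paper's route is shorter but presupposes familiarity with \cite{p-adic-lie}.

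One point you should make explicit, which the paper states but you leave implicit: after you know $\ell(L\cap F_2)=\bigcap_i D_{\gamma_i}$, you need $\ell^{-1}(D_\gamma)=F_{\gamma'}$ for some $\gamma'$, not merely some definable subgroup. This follows from the standard fact that $v(\log(1+t))=v(t)$ once $v(t)>v(p)/(p-1)$, applied in $\mathbb{Q}_p(\sqrt d)$; it gives $\ell(F_\alpha)=D_\alpha$ (unramified case) or the appropriate shift (ramified case). Also, your opening sentence assumes $d$ is a unit, but the hypothesis only gives $v(d)\geq 0$; the compactness of $G(d)(\mathbb{Q}_p)$ holds in either case since the torus is anisotropic whenever $d\notin(\mathbb{Q}_p^\times)^2$, so your argument goes through unchanged, but the wording should be adjusted.
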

\begin{proof}
Observe that
$F_2(\mathbb{Q}_p)\subset {\rm GL}(\mathbb{Z}_p)$ has the topology
given by a $p$-valuation in the group $G(\mathbb{Q}_p)$. 
See \cite{p-adic-lie} Example 23.2. 
As this is a 1-dimensional compact
Lie group then this $p$-valuation has rank 1,
see \cite{p-adic-lie} Theorem 27.1, and (the proof of) Proposition 26.15.
Then by \cite{p-adic-lie} Proposition 26.6 this group is isomorphic to
$(p^2\mathbb{Z}_p,+)$. By \cite{p-adic-lie} Theorem 29.8 this isomorphism
is locally analytic. So this isomorphism extends to an isomorphism
$p^2Z_p\to F_2$ in $Q_p^{an}$. From the definition of the morphism
it follows that it transforms the filtration
$D_{\alpha}=\{x\in Z_p\mid v(x\geq \alpha)$ into $F_{\alpha}$.
So Proposition \ref{type-def-subgroups-of-Qp} finishes the proof.
\end{proof}
From which we get
\begin{prop}\label{case-torus}
If $G$ is an interpretable group and $L\subset G$ is a bounded index type-definable
group with an injective type-definable isomorphism $L\to G(d)$ then
$G$ contains a finite index definable subgroup isomorphic to $F_{\alpha}$
for some $\alpha$. Here $d$ and $F_{\alpha}$ are as in the previous Proposition.
\end{prop}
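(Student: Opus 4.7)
The strategy mirrors the $U_\alpha$ case in the proof of Proposition \ref{case-Qptimes}. Let $\phi:L\to G(d)$ be the given injective type-definable group morphism. Applying the preceding Proposition to the type-definable subgroup $\phi(L)\subset G(d)$, the group $\phi(L)\cap F_2$ is a bounded intersection $\bigcap_{i\in I}F_{\alpha_i}$; since $F_2$ has finite index in $G(d)$, this intersection has finite index in $\phi(L)$. Passing to the finite-index type-definable subgroup $L_0=\phi^{-1}(F_2)\cap L$, which still has bounded index in $G$, we may assume $\phi(L_0)=\bigcap_{i\in I}F_{\alpha_i}\subset F_2$.

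The inverse $\psi=\phi^{-1}:\phi(L_0)\to L_0\subset G$ is a type-definable injective group morphism between type-definable sets. By compactness, since $\phi(L_0)$ is the bounded intersection of the definable groups $F_{\alpha_i}$, the map $\psi$ extends to a definable injective function $\tilde\psi:F_\alpha\to G$ on some $F_\alpha\supset\phi(L_0)$. Applying compactness once more to the type-definable identity $\tilde\psi(xy)=\tilde\psi(x)\tilde\psi(y)$, after enlarging $\alpha$ we may further assume that $\tilde\psi$ is a group morphism on all of $F_\alpha$.

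Then $H=\tilde\psi(F_\alpha)$ is a definable subgroup of $G$ isomorphic to $F_\alpha$ via $\tilde\psi$, and it contains $L_0$, hence has bounded index in $G$. Finally, a definable bounded-index subgroup of a saturated group has finite index (otherwise saturation would produce $\kappa$-many cosets, contradicting boundedness), so $H$ is the desired finite-index definable subgroup of $G$ isomorphic to $F_\alpha$.

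I do not expect a substantial obstacle here; the real work is absorbed into the preceding Proposition (via the $p$-adic Lie theory input from \cite{p-adic-lie}) and the two compactness extensions above, which are entirely parallel to the compactness arguments in Sections \ref{Qp} and \ref{Qptimes}. If anything, the step requiring the most care is the joint compactness argument producing a single $F_\alpha$ on which $\tilde\psi$ is simultaneously defined, injective, and a homomorphism, but each of these is a type-definable condition that is already satisfied on $\phi(L_0)$, so the usual argument applies.
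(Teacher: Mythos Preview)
Your proof is correct and follows exactly the approach the paper intends: the paper records this proposition with no proof beyond ``From which we get,'' leaving the reader to run the same compactness argument used in Proposition~\ref{generic-to-subgroup-of-Qp} and the $U_\alpha$ case of Proposition~\ref{case-Qptimes}, which is precisely what you have done. The only quibble is that the previous proposition is stated for \emph{bounded index} type-definable subgroups of $G(d)$ while $\phi(L)$ is not a priori of bounded index, but the proof of that proposition does not actually use this hypothesis (it only uses the $Q_p^{an}$-isomorphism $F_2\cong p^2Z_p$ together with Proposition~\ref{type-def-subgroups-of-Qp}), so the conclusion $\phi(L)\cap F_2=\bigcap_i F_{\alpha_i}$ still holds.
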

We end this section by remarking that in the cases analysed the description
of type-definable $p$-adic groups shows in particular that $G^{00}$ is 
the group of infinitesimals. Were the group of infinitesimals
is defined as the kernel of the standard part map 
${\rm st}:G(d)\to G(d)(\mathbb{Q}_p)$ which sends $(x,y)$ to a point
$(x',y')\in\mathbb{Z}_p^{2}$ such that $v(x-x'),v(y-y')>\mathbb{Z}$.
This was already shown in \cite{pillay-onshuus} section 2, in the more
general case of a definably compact 
group definable with parameters in $\mathbb{Q}_p$.
There one sees also that $G^0=G^{00}$. In this generality 
this equality is also seen alternatively
as a consequence of \cite{p-adic-lie} Proposition 26.15.
\section{Subgroups of elliptic curves}\label{elliptic}
In this section we calculate the type-definable subgroups of an elliptic
curve. We will use \cite{elliptic-1} and \cite{elliptic-2} as general references.
We start with a review of properties of elliptic curves.

Here we shall be interested in the $Q_p$-points of an elliptic
curve defined over $Q_p$. For convenience we will take $Q_p$ to be a 
monster model of the analytic language.
We will say definable to mean definable in the valued field language and
$Q_p^{an}$-definable to mean definable in the analytic language.

So assume one is given an equation of the form 
\begin{equation}\label{ell} y^2+a_1xy+a_3y=x^3+a_2x^2+a_4x+a_6\end{equation}
with
$a_1, \cdots, a_6\in Q_p$, such that it has nonzero discriminant
$\Delta\neq 0$, see \cite{elliptic-1} page 42 for the definition of $\Delta$,
it is a polinomial with integer coefficients on the $a_i$. 
Note
also the definition of the $j$-invariant 
as a quotient of another such polinomial and $\Delta$.

Then one takes $E(Q_p)$ to be the set of pairs $(x,y)\in Q_p^2$ that satisfy
this equation and an aditional point $O$ (the point at the infinity).
This is in bijection with the projective closure of the variety defined
by the equation in the plane, this is 
$E(Q_p)\subset \mathbb{P}^2(Q_p)$, 
$[X:Y:Z]\in E(Q_p)$ iff $Y^2Z+a_1XYZ+a_3YZ^2=X^3+a_2X^2Z+a_4XZ^2+a_6Z^3$.
This set $E(Q_p)$ has the estructure of a 
commutative algebraic group, in particular
$E(Q_p)$ forms a group definable in $Q_p$. See III.2.3 of \cite{elliptic-1}
for explicit formulas for the group law.

Now a change of variables $x=u^2x'+r$ and $y=u^3y'+u^2sx'+t$
for $u,r,s,t\in Q_p, u\neq 0$ gives an algebraic group isomorphism
of $E(Q_p)$ and $E'(Q_p)$, where $E'$ is given by coefficients
$a_i'$ given in chapter III Table 3.1 of 
\cite{elliptic-1} ($u^ia'_i$ are polinomials
with integer coefficients on $a_i,s,t,r$). Also included there is the relation
on the discriminants $u^{12}\Delta'=\Delta$.

After a change of variables we can make $a_i\in Z_p$. Among all the equations
obatined by a change of variables there is one such that $a_i\in Z_p$ and 
$v(\Delta)$ is minimal. This is because the set of such $v(\Delta)$ is a 
definable set of $Z$ and so has a minimum. An equation making $v(\Delta)$
minimal is called a minimal Weierstrass equation.

Take now an elliptic curve given by a minimal Weierstrass equation.
Then one can reduce the coefficients of this minimal equation and obtain
a projective algebraic curve over $\mathbb{F}_p$. We take the set
of $\mathbb{F}_p$-points to be 
$\tilde{E}(\mathbb{F}_p)$ the set of pairs $(x,y)\in\mathbb{F}_p^2$
satisfying the reduced Weirstrass equation, together with a point at infinity.
We obtain a map $E(Q_p)\to \tilde{E}(\mathbb{F}_p)$.
If the minimal weirstrass equation used to define $\tilde{E}$ is $f(x,y)$
with discriminant $\Delta$, then if $v(\Delta)=0$ we conclude the 
$\tilde{E}$ is an elliptic curve defined over $\mathbb{F}_p$
and in this case the curve is said to have good reduction.
Otherwise
set $E_{ns}(\mathbb{F}_p)$ the set of pairs $(x,y)\in\mathbb{F}_p^2$ in
$E(\mathbb{F}_p)$ such that
$\frac{\partial}{\partial x}\bar{f}(x,y)\neq 0$
or
$\frac{\partial}{\partial y}\bar{f}(x,y)\neq 0$,
together with the point at infinity.
Then $E(\mathbb{F}_p)\setminus E_{ns}(\mathbb{F}_p)$
consists of a single point $(x_0,y_0)$, 
see Proposition III.1.4 of \cite{elliptic-1}.
Also $E_{ns}(\mathbb{F}_p)$ is group.
If $\bar{f}(x-x_0,y-y_0)=y^2+\bar{a}'_1xy-\bar{a}'_2x^2-x^3$,
then if $d=(\bar{a}'_1)^2+4\bar{a}'_2= 0$, one gets
$E_{ns}(\mathbb{F}_p)\cong (\mathbb{F}_p,+)$ and the curve is said
to have additive reduction.
If $d\neq 0$ and $d$ is not a square, then 
$E_{ns}(\mathbb{F}_p)\cong \{a\in \mathbb{F}_{p^2}^{\times}\mid 
N_{\mathbb{F}_{p^2}/\mathbb{F}_p}(a)=1\}$ is a one dimensional
twisted torus as in Section \ref{torus}, and $E$ is said to have
non-split multiplicative reduction.
If $d\neq 0$ and $d$ is a square then 
$E_{ns}(\mathbb{F}_p)\cong (\mathbb{F}_p^{\times},\times)$,
and $E$ is said to have split multiplicative reduction.
See proposition III.2.5 of \cite{elliptic-1} 
for the definition of this group structure and the isomorphisms indicated
(together with exercise III.3.5 for the nonsplit multiplicative case).
Notice that the set of $a_i$ such that they form a minimal
Weierstrass equation of one of these reduction types, form a definable
set.

One defines in any case $E_0(Q_p)$ to be the the inverse image of
$\tilde{E}_{ns}(\mathbb{F}_p)$ under the reduction map
$E(Q_p)\to \tilde{E}(\mathbb{F}_p)$. Then $E_0(Q_p)$ is a subgroup
of $E(Q_p)$ and $E_0(Q_p)\to \tilde{E}_{ns}(\mathbb{F}_p)$
is a surjective group morphism.
see Proposition VII.2.1 of \cite{elliptic-1}. There this is
proved for $\mathbb{Q}_p$ but the same proof works
in this case also.
One defines also $E_1(Q_p)$ to be the kernel of this map.

We have on $E_1(Q_p)$ a filtration by subgroups
$E_{1,\alpha}(Q_p)$ with $\alpha\in Z_{>0}$ defined by 
$\{(x,y)\in E_1(Q_p)\mid y\neq 0, v(\frac{x}{y})\geq \alpha\}$ together
with the point at infinity. For $p\neq 2$ there is 
an isomorphism in $Q_p^{an}$ from $E_1(Q_p)$
to $(pZ_p,+)$, this isomorphism sends
$D_{\alpha}$ to $E_{1,\alpha}$. For $p=2$ the set
 $E_{1,2}(Q_p)$ has index $p$ in $E_1(Q_p)$ and it is isomorphic
 $Q_p^{an}$ to $(p^2Z_p,+)$, with an isomorphism
wich takes $D_{\alpha}$ to $E_{1,\alpha}$ for $\alpha\geq 2$.
See section IV.1 of \cite{elliptic-1} and Theorem IV.6.4
of \cite{elliptic-1}. We use here that $Q_p$ is elementarily
equivalent to $\mathbb{Q}_p^{an}$ to apply these results to $Q_p$.

As a consequence of Tate's algorithm we have that for
any elliptic curve defined over $\mathbb{Q}_p$
with good, additive or nonsplit multiplicative reduction
the group $E(\mathbb{Q}_p)/E_0(\mathbb{Q}_p)$ is finite of order
$\leq 4$, and if it has split multiplicative reduction
then $v(j(E))<0$,
see section IV.9 in \cite{elliptic-2}.
As $Q_p$ is elementary equivalent to $\mathbb{Q}_p$ we see
that this is also true of $Q_p$.
Finally if $E(Q_p)$ has split multiplicative reduction then 
then there exists an isomorphism 
$O/\langle a\rangle\cong E(Q_p)$ in $Q_p^{an}$
where $O=O(a)$ is as in Section \ref{Qptimes}. See Chapter V.5
in \cite{elliptic-2}.

Now we will give the type-definable subgroups of $H=O/\langle a \rangle$,
up to a subgroup of bounded index. A more complete list can be found in
\cite{thesis}.

We have $O\to H$ the canonical morphism, its restriction to $o(a)$
is injective.
(see Section \ref{Qptimes} for notation).
Then we have the short exact sequence
$1\to o(a)\to H\to \mathbb{R}/\mathbb{Z}\to 1$. 
Let $L\subset H$ be a type-definable subgroup.
Taking the intersection with $o(a)$ we may assume $L\subset o(a)$.
Now $L$ is as in Section \ref{Qptimes}.

This is already enough to obtain an analogue of Proposition 
\ref{case-Qptimes} for the 
language $Q_p^{an}$. We end this section by showing that the groups
obtained in in this Proposition are already definable in $Q_p$ (up to isomorphism).

Take $q\in\mathbb{Q}_p^{\times}$ with $v(q)>0$. Then the uniformization map
$(X,Y):\mathbb{Q}_p^{\times}\to E_q$ of Section V.3 of \cite{elliptic-2}, is
given by $X(u,q)=\frac{u}{(1-u)^2}+\sum_{d\geq 1}(\sum_{m|d}m(u^m+u^{-m}-2)q^d$,
and $Y(u,q)=\frac{u^2}{(1-u)^3}+
\sum_{d\geq 1}(\sum_{m|d}(\frac{1}{2}m(m-1)u^m-\frac{1}{2}m(m+1)u^{-m}+m))q^d$,
for $-v(q)<v(u)<v(q)$ and $u\neq 1$. See page 426 of \cite{elliptic-2}.
If $u=1+t$ for $v(t)>0$, then 
$v(X(u,q))=-2v(t)$ and $V(Y(u,q))=-3v(t)$. Recalling that the uniformizing
map maps $\mathbb{Z}_p^{\times}$ onto $E_0(\mathbb{Q}_p)$ and $U_1$ onto 
$E_1(\mathbb{Q}_p)$, see section V.4 of \cite{elliptic-2}, 
we conclude that it maps $U_{\alpha}$ onto
$E_{1,\alpha}(\mathbb{Q}_p)$. Then the map $O\to E(Q_p)$ does the same.
We return to $\mathbb{Q}_p$.
If $a_d=\sum_{m|d}m(u^m+u^{-m}-2)$ then for $u$ such that
$1\leq v(u)\leq r$ and $2r< v(q)$ we have 
$v(a_d)\geq -dv(u)\geq -dr$ and
$v(a_dq^d)\geq d(v(q)-r)>r\geq v(u)=v(\frac{u}{(1-u)^2})$. So
$v(X(u,q))=v(u)$.
Similarly from the power series for $Y(u,q)$ one gets
$v(Y(u,q))=2v(u)$, for $1\leq v(u)\leq r$ and $3r<q$.
If $r\in\mathbb{Z}$ is such that $1\leq r, 3r<q$, then
the uniformizing map maps $S_r=\{x\in \mathbb{Q}_p^{\times}\mid v(x)=r\}$
into $V_r=\{(x,y)\in E(\mathbb{Q}_p)\mid v(x+y)=v(x)=r<v(y)\}$.
On the other hand we know that $V_r$ is a $E_0(Q_p)$-coset,
see the Lemma V.4.1.4 of \cite{elliptic-2} (and the discussion preceding it).
We conclude that $S_r$ maps onto $V_r$. We see then that
the image $T_r=\{x\in \mathbb{Q}_p^{\times}\mid 1\leq v(x)\leq r\}$
in $E$ is definable (in $r$ and $q$). So the same is true in $Q_p$.
We see then that the image of $v^{-1}[-r,r]\subset O$ in $E$
is definable as a union $u(T_r)\cup E_0\cup u(T_r)^{-1}$.
We conclude that the image of $o(a)\subset O$ is type-definable
and Proposition \ref{localgroupglobal} applies. I will call
the resulting group $O_{E}$, it is a disjoint union of definable sets
with a $Q_p^{an}$-isomorphism $w:O\to O_E$. Call the image
of $o(a)$ in $O_E$ $o_E$, which is type-definable.
Then by compactness
 there is a definable set $S$ which contains $o_E$ and 
which maps injectively into $E$.
By compactness $S$ contains the image of a 
$v^{-1}[-r,r]$ for a $r\in Z$ with 
$r\notin o(v(a))$ and $0\leq r, 3r<v(a)$.
Now by definability of $u(v^{-1}[0,s])$ and $u(U_{\alpha})$ in
$E$ for $0\leq s \leq r$, we conclude that $w(v^{-1}[0,s])$ and
$w(U_{\alpha})$ are definable in $O_E$. Now for any
$s,s'\in v(O)$ the set $v^{-1}[s',s]$ is a finite product
of the sets $v^{-1}[0,t]$ and their inverses, so
$wv^{-1}[s',s]$ is also definable.
We define the group 
$H_E(b)$ for $b\in O_E$ with $vw^{-1}(b)>\mathbb{Z}$ as
having underlying set $wv^{-1}[0,v(b))$ and multiplication
$c\cdot_{H_E(b)}c'=cc'$ if $vw^{-1}(cc')<v(b)$ and 
$cc'b^{-1}$ if $vw^{-1}(cc')\geq v(b)$.
This is a definable group and $w$ induces a $Q_p^{an}$-definable
isomorphism $H(b)\cong H_E(w(b))$. We have also
a definable 
group isomorphism $O_E(b)/\langle b\rangle\cong H_E(b)$.
\begin{prop}\label{case-elliptic}
Let $E$ be an elliptic curve with split multiplicative reduction, 
$G$ an interpretable group and $L\subset G$ a type-definable subgroup
of bounded index. Let
$\phi:L\to E$ be an injective type-definable group morphism.
Then $G$ has a finite index interpretable subgroup $H\subset G$ which is
definably isomorphic to $0$, $E_{1,\alpha}$ or 
$O_E(b)^n/\langle b^n\rangle$.
\end{prop}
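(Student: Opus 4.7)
The plan is to follow the same strategy as in Proposition \ref{case-Qptimes}, transporting the situation through Tate's uniformization $O/\langle a\rangle \cong E$ (in $Q_p^{an}$) and then using the group $O_E$ and the isomorphism $w:O\to O_E$ constructed just before the statement, which are available in $Q_p$ thanks to Proposition \ref{localgroupglobal}.

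First I would replace $L$ by $\phi^{-1}(E_0)\cap L$, which is still of bounded index since $E/E_0$ is finite (by Tate's algorithm), so without loss of generality $\phi(L)\subset E_0$. Composing with the Tate uniformization $u:O\to E$ in $Q_p^{an}$ gives an injective $Q_p^{an}$-type-definable morphism from $\phi(L)$ into $O/\langle a\rangle$, and intersecting with the image of $o(a)$ (which injects into $E$) yields a type-definable subgroup of $o(a)\subset Q_p^\times$, which is of one of the forms classified in Proposition \ref{type-definable-subgroups-of-Qptimes-v2}: trivial, a bounded intersection of $U_\alpha$'s, or the intersection $\bigcap_n(Q_p^\times)^n o(a)$ of the form giving $O(a)^n$-type groups.

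In the first two cases the argument of Proposition \ref{case-Qptimes} goes through verbatim: one extends $\psi=\phi^{-1}$ by compactness to a type-definable injective morphism defined on $U_\alpha$ (or the trivial group), transports through $w$ and the identification $U_\alpha \cong E_{1,\alpha}$ recorded in the preamble, and observes that the image is definable and of finite index in $G$. For the third case one follows the same template as at the end of the proof of Proposition \ref{case-Qptimes}: use the definable sets $I_m$ and the isomorphism $O(a)^n/o(a)^n\cong \mathbb{R}$ to extend $\psi$ from $o(a)^n$ to a genuine group homomorphism $\psi_2:O(a)^n\to G$, and then translate this via $w$ into $O_E(b)^n$. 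The point is that $O_E$ and the group $H_E(b)\cong O_E(b)/\langle b\rangle$ are already $Q_p$-definable (a disjoint union of $Q_p$-definable pieces with $Q_p$-definable product), so $\psi_2$ yields a $Q_p$-definable homomorphism $O_E(b)^n\to G$ whose image is an $\lor$-definable subgroup; the same translate-covering argument shows the image is generated in finitely many steps by a definable set and is hence definable of finite index. The kernel analysis using that subgroups of $\mathbb{R}$ are $0$, $\mathbb{Z}\cdot x$ or dense, combined with the injectivity forced by compactness, forces the kernel to be $\langle b^n\rangle$, giving the quotient $O_E(b)^n/\langle b^n\rangle$.

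The main obstacle is purely bookkeeping: ensuring at every stage that what starts as $Q_p^{an}$-definable (the Tate map, the uniformization, the identification with $O$) is transferred to $Q_p$-definable objects. This is precisely what the preamble to the statement arranges — the sets $w(v^{-1}[s',s])$ and $w(U_\alpha)$ are shown to be $Q_p$-definable inside $O_E$, and Proposition \ref{localgroupglobal} promotes the local $Q_p^{an}$-structure to a globally $Q_p$-definable group. Once one trusts that setup, the proof is essentially a translation of Proposition \ref{case-Qptimes} through the square $O\xrightarrow{w} O_E$, $O\xrightarrow{u} E$.
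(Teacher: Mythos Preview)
Your proposal is correct and follows essentially the same route as the paper: reduce $\phi(L)$ into the image of $o(a)$ under the Tate uniformization, obtain a $Q_p^{an}$-type-definable injection $L\to o(a)$, invoke the proof of Proposition~\ref{case-Qptimes} to extend $\phi^{-1}$ to either $U_\alpha$ or $O(b)^n$, and then use the map $w$ and the preamble's definability results to pass from $Q_p^{an}$-definable data to $Q_p$-definable data. The paper's proof is terser (it restricts directly to $o_E(a)$ rather than first to $E_0$, and simply cites ``the proof of Proposition~\ref{case-Qptimes}'' for the case analysis), but the substance is identical.
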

\begin{proof}
$\phi(L)$ is a type-definable subgroup of $E$, so after restricting to 
a bounded index type-definable subgroup $\phi(L)\subset o_E(a)$.
 We get a $Q_p^{an}$-type-definable group morphism injection
$\psi:L\to o(a)$, so by the proof of
 Proposition \ref{case-Qptimes} we see that the
restriction of the inverse of $\phi$ to a finite index
type-definable subgroup extends to a definable group morphism
$O(b)^n\to H$ with kernel $\langle b^n\rangle$ or 
$U_{\alpha}\to H$ with trivial kernel,
onto a finite index subgroup $H$ of $G$, and also the composition with $w^{-1}$ is 
definable.
\end{proof}
\section{One dimensional groups definable in $Q_p$}\label{main}
Here we list the one dimensional definable groups, 
up to finite index subgroups and quotient by finite kernel.
This is the main theorem of the document.
\begin{prop}\label{main-teo}
If $G$ is a one dimensional group definable in $Q_p$, then
there exist subgroups $K\subset G'\subset G$ such that
$G'$ is definable of finite index in $G$, $K$ is finite
$G'$ is commutative and $G'/K$ is definably isomorphic to one
of the following groups:
\begin{enumerate}
\item $(Q_p,+)$.
\item $(Z_p,+)$.
\item $((Q_p^{\times})^n,\cdot)$.
\item $(U_{\alpha},\cdot)$. 
Where $U_{\alpha}=\{x\in Q_p^{\times}\mid v(1-x)\geq\alpha\}$.
\item $O(a)^n/\langle a^n\rangle$ as defined in Section \ref{Qptimes}.
\item $F_{\alpha}$. 
Where $F_{\alpha}=\{
\begin{bmatrix} x & dy\\ y & x \end{bmatrix}\mid x^2-dy^2=1, v(1-x)\geq \alpha,
v(y)\geq \alpha\}$
and $d\in \mathbb{Q}_p^{\times}\setminus (\mathbb{Q}_p^{\times})^2$,
and $v(d)\geq 0$.
\item $E_{1, \alpha}$. For $E$ an elliptic curve.
\item $O_E(b)^n/\langle b^n\rangle$. For $E$ a Tate Elliptic curve.
\end{enumerate}
The definition of $E_{1, \alpha}$ and $O_E(b)$ are 
in Section \ref{elliptic}.
\end{prop}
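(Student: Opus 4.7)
The plan is to apply Proposition \ref{mop-teo} to reduce to the algebraic setting and then invoke the case-by-case analyses of Sections \ref{Qp}--\ref{elliptic}.

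First I would observe that $G$ is definably amenable: by Proposition \ref{abelian-by-finite} it is abelian-by-finite, and such groups are amenable. Since $Q_p$ is algebraically bounded and dependent, Proposition \ref{mop-teo} then provides a type-definable subgroup $T \subset G$ of bounded index, an algebraic group $H$, and a type-definable morphism $\phi \colon T \to H$ with finite kernel. After replacing $H$ by the connected component of the Zariski closure of $\phi(T)$ (and $T$ by the corresponding finite-index type-definable subgroup), $H$ is connected of dimension $\dim T = 1$; the classification of one-dimensional connected algebraic groups over a field of characteristic zero tells me $H$ is one of $\mathbb{G}_a$, $\mathbb{G}_m$, a twisted one-dimensional torus $G(d)$, or an elliptic curve $E$.

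Next I would split into cases and invoke the corresponding proposition from the earlier sections, applied to the injective type-definable morphism $\phi^{-1} \colon \phi(T) \to G/\ker\phi$, with $K := \ker\phi$ finite. The case $H = \mathbb{G}_a$ follows from Propositions \ref{type-def-subgroups-of-Qp} and \ref{generic-to-subgroup-of-Qp}, producing (1) or (2) according to whether the image is $Q_p$ or some $D_\alpha \cong Z_p$. The multiplicative case is Proposition \ref{case-Qptimes}, giving (3), (4), or (5). The twisted torus case is Proposition \ref{case-torus}, giving (6). When $H$ is an elliptic curve I would further subdivide by reduction type: if $E$ has good, additive, or non-split multiplicative reduction, Tate's algorithm bounds $|E(Q_p)/E_0(Q_p)|$, and the residue group $\tilde{E}_{ns}(\mathbb{F}_p)$ is either finite or a twisted torus (already treated), so passing to $\phi^{-1}(E_1(Q_p))$ (or $\phi^{-1}(E_{1,2})$ when $p = 2$) at a finite cost reduces to the filtration $E_{1,\alpha}$, which via a $Q_p^{an}$-isomorphism with $D_\alpha$ on $p^k Z_p$ is analyzed by Proposition \ref{type-def-subgroups-of-Qp} to yield (7); if $E$ has split multiplicative reduction, Proposition \ref{case-elliptic} applies directly and yields (7) or (8).

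The main obstacle I expect lies in the elliptic case, where the isomorphisms produced above exist a priori only in the analytic language $Q_p^{an}$, whereas the statement demands $G'/K$ be $Q_p$-definably isomorphic to one of the listed groups. I would resolve this exactly as in the closing discussion of Section \ref{elliptic}: the sets $T_r$ and $V_r$ built from the Tate uniformization are shown to be $Q_p$-definable, and Proposition \ref{localgroupglobal} converts the $Q_p^{an}$-type-definable subgroup $o(a) \subset O$ into a $Q_p$-definable group $O_E$ with a $Q_p^{an}$-isomorphism $w \colon O \to O_E$, so that both the groups $O_E(b)^n/\langle b^n \rangle$ and the filtration pieces $E_{1,\alpha}$ of the statement are realized as $Q_p$-definable objects. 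Finally commutativity of $G'$ is ensured by choosing $G'$ inside the characteristic finite-index abelian subgroup of $G$ supplied by Proposition \ref{abelian-by-finite}.
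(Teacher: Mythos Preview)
Your proposal is correct and follows essentially the same route as the paper: reduce to an abelian finite-index subgroup via Proposition~\ref{abelian-by-finite}, apply Proposition~\ref{mop-teo} to obtain $T\to H$ with finite kernel, replace $H$ by the connected component of the Zariski closure of the image to get a one-dimensional connected algebraic group, and then invoke the case-by-case Propositions~\ref{generic-to-subgroup-of-Qp}, \ref{case-Qptimes}, \ref{case-torus}, and~\ref{case-elliptic} (together with the inline discussion of Section~\ref{elliptic} for the non-Tate reduction types). One small wording issue: when you write ``applied to the injective type-definable morphism $\phi^{-1}\colon\phi(T)\to G/\ker\phi$'', note that the cited propositions are stated for an injective map from a bounded-index subgroup of the interpretable group \emph{into} the algebraic target, so you should phrase it as applying them to $G/K$ with $L=T/K$ and the induced injection $L\to H$; the content is the same.
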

\begin{proof}
By Proposition \ref{abelian-by-finite}, we may assume
$G$ is commutative, and Theorem \ref{mop-teo} applies.
So we get a type-definable bounded index group
$T\subset G$ and an algebraic group $H$ and a 
type-definable group morphism $T\to H$ with finite kernel.
Replacing $H$ by the Zariski closure of the image of $T\to H$ we may assume
$H$ is a one-dimensional algebraic group.
Replacing $H$ by the connected component of the identity
(which is a finite index algebraic subgroup) we assume
that $H$ is a connected algebraic group.
Then $H$ is isomorphic as an algebraic group
to the additive group or the multiplicative group or
the one dimensional twisted torus or an elliptic curve.
These cases are dealt with in Propositions \ref{generic-to-subgroup-of-Qp}
\ref{case-Qptimes}, \ref{case-torus}, Section \ref{elliptic},
and Proposition \ref{case-elliptic} for the Tate curve case.
\end{proof}
We remark that Proposition \ref{finite-torsion} below implies that the finite
kernel is unnecessary in all cases except maybe the lattice ones
5) and 8). In these cases I do not know if it is necessary.
\begin{prop}\label{universally-injective-implies-split}
Suppose $K\subset G$ are abelian groups and consider the conditions:
\begin{enumerate}
\item $G$ is a Hausdorff topological group and $K$ is compact.
\item $nG\cap K=nK$.
\end{enumerate}
If 1) occurs, 
then the injection of abstract groups $K\to G$ splits iff condition 2) occurs.

If $K$ is finite then 1) is true for the discrete topology in $G$.
If $G/K$ is torsion free then 2) is true.
\end{prop}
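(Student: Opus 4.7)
Three of the four assertions are immediate. If a retraction $r:G\to K$ splits the inclusion, then for $k=ng\in nG\cap K$ we have $k=r(k)=r(ng)=nr(g)\in nK$, whence $nG\cap K\subset nK$ and the other inclusion is trivial. If $K$ is finite, the discrete topology on $G$ makes it a Hausdorff topological group in which $K$ is compact. If $G/K$ is torsion free, then $ng\in K$ forces $\bar g=0$ in $G/K$, so $g\in K$ and $ng\in nK$. The substantive content is the remaining direction: assuming $K$ is compact Hausdorff in $G$ and $nG\cap K=nK$ for every $n$, I would produce a group-theoretic retraction $G\to K$.

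The plan is first to construct retractions on finitely-generated-mod-$K$ pieces and then glue via Tychonoff. Given $H$ with $K\subset H\subset G$ and $H/K$ finitely generated, write $H/K\cong\bigoplus_{i=1}^r \mathbb{Z}/n_i\mathbb{Z}$ (allowing $n_i=0$ for free summands). Lift the generators to $g_i\in H$, so $n_ig_i\in K$. The purity hypothesis restricts from $K\subset G$ to $K\subset H$ (since $nH\cap K\subset nG\cap K=nK$), providing $k_i\in K$ with $n_ig_i=n_ik_i$; the elements $\tilde g_i=g_i-k_i$ then satisfy $n_i\tilde g_i=0$ and still lift the generators of $H/K$, so they define a section $s_H:H/K\to H$, whence a retraction $r_H:H\to K$ with $r_H|_K=\mathrm{id}$.

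To glue, equip $K^G$ with the product topology, which is compact Hausdorff by Tychonoff. For each such $H$ set
\[
\tilde R_H=\{\,r\in K^G\mid r(k)=k\text{ for all }k\in K,\ r(h+h')=r(h)+r(h')\text{ for all }h,h'\in H\,\}.
\]
Each $\tilde R_H$ is closed in $K^G$ (all defining conditions are closed because $K$ is Hausdorff) and nonempty by the previous paragraph (extend $r_H$ arbitrarily off $H$). For any finite family $H_1,\dots,H_m$, the sum $H_1+\cdots+H_m$ is still finitely generated mod $K$ and $\tilde R_{H_1+\cdots+H_m}\subset\bigcap_i \tilde R_{H_i}$, giving the finite intersection property. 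Compactness then produces $r\in\bigcap_H \tilde R_H$, which is a homomorphism on each such $H$ and thus on $G=\bigcup_H H$, and restricts to the identity on $K$, as required.

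The main obstacle is to recognize that purity alone does not suffice for splitting, and that compactness of $K$ must be invoked precisely in the gluing step. The finitely-generated-mod-$K$ case reduces to the classical computation that extensions of $\mathbb{Z}/n\mathbb{Z}$ by $K$ are classified by $K/nK$, and purity forces each such class to vanish; assembling these local splittings into a global one is where compactness becomes essential.
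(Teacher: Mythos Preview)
Your argument is correct and follows essentially the same route as the paper: handle the finitely-generated-over-$K$ case by purity (adjusting lifts of generators by elements of $K$), and assemble the local splittings by a Tychonoff/finite-intersection-property argument in a product of copies of $K$. The only cosmetic difference is that the paper works with sections $G/K\to G$ viewed inside $K^{G/K}$, whereas you work with retractions $G\to K$ inside $K^G$; these are equivalent via $r(g)=g-s(\pi(g))$.
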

\begin{proof}
Assume 1). 
Suppose that $0\to K\to G\to G/K\to 0$ splits. Then tensoring by
$\mathbb{Z}/n\mathbb{Z}$ remains exact, which is exactly 2).

Now assume 1) and 2).
Choose a set theoretic section $\phi:G/K\to G$. Then the set
of all such sections is in biyective correspondence with the maps
$K^{G/K}$, and the set of group sections is closed in the product topology.
As $K^{G/K}$ is a compact topological space it is then enough
to show that for all finitely generated subgroups $A\subset G/K$
the map $\pi^{-1}A\to A$ splits, that is, without loss of generality
$G/K$ is finitely generated.
Take $\mathbb{Z}^n\to G/K$ is a surjective group homomorphism
and $\alpha:\mathbb{Z}^n\to G$ is a lift.
Then after a base change one may assume that the kernel of $\mathbb{Z}^n\to G/K$
is $T=n_1\mathbb{Z}\times\cdots\times n_r\mathbb{Z}$. 
From the assumption we conclude
that ther exists $\beta:\mathbb{Z}^n\to K$ such that $\alpha-\beta$ has kernel
$T$. That is, $\alpha-\beta$ factors as a section $G/K\to G$ as required.
\end{proof}
We note that condition 2) is equivalent to universal injectivity of the
map of $\mathbb{Z}$-modules $K\to G$, and replacing this condition
for universal injectivity of topological $R$-modules it remains
true that it is equivalent to splitting. 
Similarly $G/K$ is torsion-free iff it is 
flat as a $\mathbb{Z}$-module, and this implies condition 2) in the setting
of $R$-modules too. 

We note also that if $G$ is an invariant 
abelian group (or $R$-module) 
with relatively type-definable product 
and $K$ is a type-definable
subgroup of bounded index the proposition is also true using logic
compactness in the proof.
\begin{prop}\label{finite-torsion}
Take $L$ a definable abelian group in some language.
Assume $A\subset L$ is the torsion part of $L$ and is injective.
Assume that for every $n$ $[L:nL]$ is finite.
Then $A\to L$ is split injective and any retraction $L\to A$ is definable.
\end{prop}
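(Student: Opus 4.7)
The split injectivity of $A \hookrightarrow L$ is immediate: since $A$ is a divisible abelian group it is injective as a $\mathbb{Z}$-module, so the identity $A \to A$ extends along the inclusion $A \hookrightarrow L$ to yield a retraction $\rho : L \to A$.

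For the definability of an arbitrary retraction $\rho$, the plan is to exploit two consequences of the hypotheses. First, divisibility of $A$ gives $A = nA \subseteq nL$ for every $n$, so the natural map $L/nL \to (L/A)/n(L/A)$ is an isomorphism; combined with the hypothesis $[L:nL] < \infty$, this makes each $nL$ a definable subgroup of finite index in $L$ (being the image of the definable endomorphism $x \mapsto nx$). Second, writing $K = \ker\rho$, we have $L = A \oplus K$ with $K$ torsion-free, so the composite $K \hookrightarrow L \twoheadrightarrow L/nL$ is an isomorphism. Since $A = \bigcup_n A[n]$ is a countable union of finite definable subgroups (kernels of multiplication by $n$), the graph of $\rho$ naturally decomposes as a countable union of pieces $\Gamma_n = \{(x,y) : y \in A[n],\ \rho(x) = y\}$, and the plan is to show that each $\Gamma_n$ is definable with finitely many parameters. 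For each $n$, I would pick representatives $c_1,\dots,c_r \in K$ of $L/nL$ (possible since $K \twoheadrightarrow L/nL$) as parameters; then the condition $\rho(x) \in A[n]$ becomes $nx \in nK$, which is a condition on which coset of $n^2 L$ the element $nx$ lies in (using that $nL = A \oplus nK$ and $nA = A$). Refining this to single out $\rho(x)=y$ for a given $y\in A[n]$ uses divisibility of $A$ to pass between $A$ and $A/A[n]$. Assembling over $n$ in the $\lor$-definable framework of Proposition~\ref{localgroupglobal} then presents $\rho$ as a definable function in the sense of the paper.

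The main obstacle is the precise translation of the condition ``$\rho(x) = y$'' into purely group-theoretic definable language, given the general non-uniqueness of retractions: divisibility of $A$ supplies $n$-th roots but not canonically, so one must express each $\Gamma_n$ using only the chosen finite parameter set together with the definable subgroups $nL$ and $A[n]$, and then check that the descriptions for different $n$ are compatible. The finiteness hypothesis $[L:nL] < \infty$ is the essential input ensuring that finitely many parameters suffice at each stage. The case where $A$ has bounded exponent forces $A = 0$ by divisibility (and is thus vacuous), so the technical content is concentrated in the case where $A$ has unbounded exponent and the $\lor$-definable assembly is genuinely needed.
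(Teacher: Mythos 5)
Your proof does not match the proposition the paper actually needs, and the part you attempt in earnest has both a gap and a concrete error. The hypothesis ``$A$ is injective'' should not be read as ``$A$ is an injective $\mathbb{Z}$-module'': the paper's own proof takes $n=\mathrm{Card}(A)$ to be a natural number, the label and the remark preceding the proposition speak of a \emph{finite} kernel, and the intended application in Proposition \ref{main-teo} is to split off a finite torsion subgroup (e.g.\ the roots of unity in $Q_p^{\times}$), which is never divisible unless trivial. Under that intended reading ($A$ finite) your first step collapses, since a nontrivial finite abelian group is not injective as a $\mathbb{Z}$-module; the paper instead obtains the splitting from Proposition \ref{universally-injective-implies-split}, verifying condition 1) by giving $L$ the discrete topology (so that the finite $A$ is compact) and condition 2) from the torsion-freeness of $L/A$, which holds because $A$ is the \emph{full} torsion part. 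The definability is then a one-line consequence of the hypothesis $[L:nL]<\infty$: any retraction $\rho$ satisfies $\rho(nL)=n\rho(L)\subset nA=0$ for $n=\mathrm{Card}(A)$, so $\rho$ factors through the finite group $L/nL$; since $nL$ is definable (it is the image of $x\mapsto nx$) and has finitely many cosets, the graph of $\rho$ is a finite union of sets of the form (definable coset) $\times$ (single point), hence definable.

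Even granting your reading of ``injective'' as ``divisible'', the definability half is not a proof. The step asserting that ``$nx\in nK$ is a condition on which coset of $n^{2}L$ the element $nx$ lies in'' is false under that very reading: divisibility gives $A=n^{2}A\subset n^{2}L$, so $a+nk$ and $nk$ lie in the same coset of $n^{2}L$ for every $a\in A$, and the coset of $nx$ modulo $n^{2}L$ cannot detect the $A$-component of $nx$. Moreover $K=\ker\rho$ is only an abstract complement with no definability attached to it, so choosing finitely many parameters from $K$ does not by itself make the sets $\Gamma_n$ definable; you yourself flag the translation of ``$\rho(x)=y$'' into definable terms as an unresolved ``main obstacle,'' and this is precisely the point that would have to be supplied. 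Finally, as you observe, on your reading the case of bounded exponent is vacuous; since the proposition exists exactly to handle a finite kernel, this is a strong signal that the reading, and hence the route, is not the intended one.
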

\begin{proof}
As $A$ is the torsion part, $L/A$ is torsion free. So
by Proposition \ref{universally-injective-implies-split}
we obtain that $A\to L$ is split injective.
If $L\to A$ is a retraction then it factors throug
$nL$ for $n={\rm Card}A$, so it is definable.
\end{proof}

\end{document}